%%%%%%%%%%%%%%%%%%%%%%%%%%%%%%%%%%%%%%%%%%%%%%%%%%%%%%%%%%%%%%%%%%%%%%
%% "Joseph Silverman" <joseph\_silverman@brown.edu>
%% . ~/.bash_profile
%%%%%%%%%%%%%%%%%%%%%%%%%%%%%%%%%%%%%%%%%%%%%%%%%%%%%%%%%%%%%%%%%%%%%%

\documentclass[12pt,reqno]{amsart} 
\usepackage{amssymb,amscd,url}
\usepackage{constants} %% \C is next constant, \Cl{name} labeled constant, \Cr{name} reference constant
\usepackage{xcolor}    %% allows colors, used for edit comments
\usepackage{bbm}
\usepackage{upref}     %% upref forces cross-references with \ref to roman (upright)

%% Typeset using double-spaced output
%% \renewcommand\baselinestretch{1.67}

\setcounter{tocdepth}{1}

\begin{document}

\allowdisplaybreaks

%% \newcommand{\JOE}[1]{{\textup{\color{blue}~$\bigstar$ \textsf{{\bf Joe:} [#1]}}}}

%%%%%%%%%%%%%%%%%%%%%%%%%%%%%%%%%%%%%%%%%%%%%%%%%%%%%%%%%%%%%%%%%%%%%%
%% Title and Author Information

\title[An Elliptic Curve Function Field Lehmer Estimate]
      {A Lehmer-Type Lower Bound for the Canonical Height on Elliptic Curves Over Function Fields}
\date{\today}

\author[Joseph H. Silverman]{Joseph H. Silverman}
\email{joseph\_silverman@brown.edu}
\address{Mathematics Department, Box 1917
  Brown University, Providence, RI 02912 USA.
  OrcID  0000-0003-3887-3248. MR Author ID 162205.}

\subjclass[2010]{Primary: 11G05; Secondary: 11R58, 14G40}
\keywords{elliptic curve, Lehmer conjecture, function field}
%% \thanks{The author's research was partially supported by Simons Collaboration Grant \#712332.}

%% 11G05 Elliptic curves over global fields
%% 11R58 Arithmetic theory of algebraic function fields
%% 14G40 Arithmetic varieties and schemes; Arakelov theory; heights

%%%%%%%%%%%%%%%%%%%%%%%%%%%%%%%%%%%%%%%%%%%%%%%%%%%%%%%%%%%%%%%%%%%%%%

\hyphenation{ca-non-i-cal semi-abel-ian}

%%%%%%%%%%%%%%%%%%%%%%%%%%%%%%%%%%%%%%%%%%%%%%%%%%%%%%%%%%%%%%%%%%%%%%
% Theorem environments

\newtheorem{theorem}{Theorem}
\newtheorem{lemma}[theorem]{Lemma}
\newtheorem{sublemma}[theorem]{Sublemma}
\newtheorem{conjecture}[theorem]{Conjecture}
\newtheorem{proposition}[theorem]{Proposition}
\newtheorem{corollary}[theorem]{Corollary}
\newtheorem*{claim}{Claim}

\theoremstyle{definition}
% The * surpresses numbering
\newtheorem{definition}[theorem]{Definition}
\newtheorem{example}[theorem]{Example}
\newtheorem{remark}[theorem]{Remark}
\newtheorem{question}[theorem]{Question}

\theoremstyle{remark}
\newtheorem*{acknowledgement}{Acknowledgements}

%%%%%%%%%%%%%%%%%%%%%%%%%%%%%%%%%%%%%%%%%%%%%%%%%%%%%%%%%%%%%%%%%%%%%%

%%%%%%%% Set Up Environment for Notation %%%%%%%%%%%%%%
% This is currently set to allow quite wide items to be defined
\newenvironment{notation}[0]{%
  \begin{list}%
    {}%
    {\setlength{\itemindent}{0pt}
     \setlength{\labelwidth}{4\parindent}
     \setlength{\labelsep}{\parindent}
     \setlength{\leftmargin}{5\parindent}
     \setlength{\itemsep}{0pt}
     }%
   }%
  {\end{list}}

%%%%%%%% Set Up Environment for Parts in Theorems %%%%%%%%%%%%%%
\newenvironment{parts}[0]{%
  \begin{list}{}%
    {\setlength{\itemindent}{0pt}
     \setlength{\labelwidth}{1.5\parindent}
     \setlength{\labelsep}{.5\parindent}
     \setlength{\leftmargin}{2\parindent}
     \setlength{\itemsep}{0pt}
     }%
   }%
  {\end{list}}
% Use \Part{(a)}, instead of \item[(a)], to ensure upright font
\newcommand{\Part}[1]{\item[\upshape#1]}

%%%%%%%% Set Up Macro for Cases %%%%%%%%%%%%%%
\def\Case#1#2{%
\paragraph{\textbf{\boldmath Case #1: #2.}}\hfil\break\ignorespaces}

%%%%%%%%%%%%%%%%%%
% Greek Alphabet %
%%%%%%%%%%%%%%%%%%
\renewcommand{\a}{\alpha}
\newcommand{\bfalpha}{{\boldsymbol\alpha}}
\renewcommand{\b}{\beta}
\newcommand{\bfbeta}{{\boldsymbol\beta}}
\newcommand{\g}{\gamma}
\newcommand{\bfgamma}{{\boldsymbol\gamma}}
\renewcommand{\d}{\delta}
\newcommand{\e}{\epsilon}
\newcommand{\f}{\varphi}
\newcommand{\bfphi}{{\boldsymbol{\f}}}
\renewcommand{\l}{\lambda}
\renewcommand{\k}{\kappa}
\newcommand{\lhat}{\hat\lambda}
\newcommand{\m}{\mu}
\newcommand{\bfmu}{{\boldsymbol{\mu}}}
\renewcommand{\o}{\omega}
\renewcommand{\r}{\rho}
\newcommand{\rbar}{{\bar\rho}}
\newcommand{\s}{\sigma}
\newcommand{\sbar}{{\bar\sigma}}
\renewcommand{\t}{\tau}
\newcommand{\z}{\zeta}

\newcommand{\D}{\Delta}
\newcommand{\G}{\Gamma}
\newcommand{\F}{\Phi}
\renewcommand{\L}{\Lambda}

%%%%%%%%%%%%%%%%%%%%
% Fraktur Alphabet %
%%%%%%%%%%%%%%%%%%%%
\newcommand{\ga}{{\mathfrak{a}}}
\newcommand{\gb}{{\mathfrak{b}}}
\newcommand{\gn}{{\mathfrak{n}}}
\newcommand{\gp}{{\mathfrak{p}}}
\newcommand{\gq}{{\mathfrak{q}}}
\newcommand{\gA}{{\mathfrak{A}}}
\newcommand{\gB}{{\mathfrak{B}}}
\newcommand{\gC}{{\mathfrak{C}}}
\newcommand{\gD}{{\mathfrak{D}}}
\newcommand{\gP}{{\mathfrak{P}}}
\newcommand{\gS}{{\mathfrak{S}}}

%%%%%%%%%%%%%%%%%%%
% Barred Alphabet %
%%%%%%%%%%%%%%%%%%%
\newcommand{\Abar}{{\bar A}}
\newcommand{\Ebar}{{\bar E}}
\newcommand{\kbar}{{\bar k}}
\newcommand{\Kbar}{{\bar K}}
\newcommand{\Pbar}{{\bar P}}
\newcommand{\Sbar}{{\bar S}}
\newcommand{\Tbar}{{\bar T}}

%%%%%%%%%%%%%%%%%%%%%%%%%
% Calligraphic Alphabet %
%%%%%%%%%%%%%%%%%%%%%%%%%
\newcommand{\Acal}{{\mathcal A}}
\newcommand{\Bcal}{{\mathcal B}}
\newcommand{\Ccal}{{\mathcal C}}
\newcommand{\Dcal}{{\mathcal D}}
\newcommand{\Ecal}{{\mathcal E}}
\newcommand{\Fcal}{{\mathcal F}}
\newcommand{\Gcal}{{\mathcal G}}
\newcommand{\Hcal}{{\mathcal H}}
\newcommand{\Ical}{{\mathcal I}}
\newcommand{\Jcal}{{\mathcal J}}
\newcommand{\Kcal}{{\mathcal K}}
\newcommand{\Lcal}{{\mathcal L}}
\newcommand{\Mcal}{{\mathcal M}}
\newcommand{\Ncal}{{\mathcal N}}
\newcommand{\Ocal}{{\mathcal O}}
\newcommand{\Pcal}{{\mathcal P}}
\newcommand{\Qcal}{{\mathcal Q}}
\newcommand{\Rcal}{{\mathcal R}}
\newcommand{\Scal}{{\mathcal S}}
\newcommand{\Tcal}{{\mathcal T}}
\newcommand{\Ucal}{{\mathcal U}}
\newcommand{\Vcal}{{\mathcal V}}
\newcommand{\Wcal}{{\mathcal W}}
\newcommand{\Xcal}{{\mathcal X}}
\newcommand{\Ycal}{{\mathcal Y}}
\newcommand{\Zcal}{{\mathcal Z}}

%%%%%%%%%%%%%%%%%%%%%%%%%%%%
% Blackboard Bold Alphabet %
%%%%%%%%%%%%%%%%%%%%%%%%%%%%
\renewcommand{\AA}{{\mathbb{A}}}
\newcommand{\BB}{{\mathbb{B}}}
\newcommand{\CC}{{\mathbb{C}}}
\newcommand{\FF}{{\mathbb{F}}}
\newcommand{\FFbar}{{\overline{\mathbb{F}}}}
\newcommand{\GG}{{\mathbb{G}}}
\newcommand{\KK}{{\mathbb{K}}}
\newcommand{\LL}{{\mathbb{L}}}
\newcommand{\NN}{{\mathbb{N}}}
\newcommand{\PP}{{\mathbb{P}}}
\newcommand{\QQ}{{\mathbb{Q}}}
\newcommand{\RR}{{\mathbb{R}}}
\newcommand{\ZZ}{{\mathbb{Z}}}
\newcommand\ONE{\mathbbm{1}}

%%%%%%%%%%%%%%%%%%%%%%%%%%
% Boldface Math Alphabet %
%%%%%%%%%%%%%%%%%%%%%%%%%%
\newcommand{\bfa}{{\boldsymbol a}}
\newcommand{\bfb}{{\boldsymbol b}}
\newcommand{\bfc}{{\boldsymbol c}}
\newcommand{\bfd}{{\boldsymbol d}}
\newcommand{\bfe}{{\boldsymbol e}}
\newcommand{\bff}{{\boldsymbol f}}
\newcommand{\bfg}{{\boldsymbol g}}
\newcommand{\bfi}{{\boldsymbol i}}
\newcommand{\bfj}{{\boldsymbol j}}
\newcommand{\bfp}{{\boldsymbol p}}
\newcommand{\bfr}{{\boldsymbol r}}
\newcommand{\bfs}{{\boldsymbol s}}
\newcommand{\bft}{{\boldsymbol t}}
\newcommand{\bfu}{{\boldsymbol u}}
\newcommand{\bfv}{{\boldsymbol v}}
\newcommand{\bfw}{{\boldsymbol w}}
\newcommand{\bfx}{{\boldsymbol x}}
\newcommand{\bfy}{{\boldsymbol y}}
\newcommand{\bfz}{{\boldsymbol z}}
\newcommand{\bfA}{{\boldsymbol A}}
\newcommand{\bfF}{{\boldsymbol F}}
\newcommand{\bfB}{{\boldsymbol B}}
\newcommand{\bfD}{{\boldsymbol D}}
\newcommand{\bfG}{{\boldsymbol G}}
\newcommand{\bfI}{{\boldsymbol I}}
\newcommand{\bfM}{{\boldsymbol M}}
\newcommand{\bfP}{{\boldsymbol P}}
\newcommand{\bfzero}{{\boldsymbol{0}}}
\newcommand{\bfone}{{\boldsymbol{1}}}

%%%%%%%%%%%%%%%%%%%%%%%%%%%%%%
% Miscellaneous New Commands %
%%%%%%%%%%%%%%%%%%%%%%%%%%%%%%
\newcommand{\Aut}{\operatorname{Aut}}
\newcommand{\Bernoulli}{\operatorname{\textsf{B}}}
\newcommand{\characteristic}{\operatorname{char}}
\newcommand{\Crit}{\operatorname{Crit}}
\newcommand{\CritVal}{\operatorname{CritVal}}
\newcommand{\Curve}{{\Gamma}}
\newcommand{\Disc}{\operatorname{Disc}}
\newcommand{\Div}{\operatorname{Div}}
\newcommand{\Dom}{\operatorname{Dom}}
\newcommand{\End}{\operatorname{End}}
\newcommand{\Fbar}{{\bar{F}}}
\newcommand{\Fix}{\operatorname{Fix}}
\newcommand{\Gal}{\operatorname{Gal}}
\newcommand{\GL}{\operatorname{GL}}
\newcommand{\Hom}{\operatorname{Hom}}
\newcommand{\id}{{\textup{id}}}
\newcommand{\Image}{\operatorname{Image}}
\newcommand{\Isom}{\operatorname{Isom}}
\newcommand{\hhat}{{\hat h}}
\newcommand{\Ker}{{\operatorname{ker}}}
\newcommand{\LCM}{\operatorname{LCM}}
\newcommand{\limstar}{\lim\nolimits^*}
\newcommand{\limstarn}{\lim_{\hidewidth n\to\infty\hidewidth}{\!}^*{\,}}
\newcommand{\Mat}{\operatorname{Mat}}
\newcommand{\maxplus}{\operatornamewithlimits{\textup{max}^{\scriptscriptstyle+}}}
\newcommand{\MOD}[1]{~(\textup{mod}~#1)}
\newcommand{\Mor}{\operatorname{Mor}}
\newcommand{\Moduli}{\mathcal{M}}
\newcommand{\Norm}{{\operatorname{\mathsf{N}}}}
\newcommand{\notdivide}{\nmid}
\newcommand{\normalsubgroup}{\triangleleft}
\newcommand{\NS}{\operatorname{NS}}
\newcommand{\onto}{\twoheadrightarrow}
\newcommand{\ord}{\operatorname{ord}}
\newcommand{\Orbit}{\mathcal{O}}
\newcommand{\Orb}{\operatorname{Orb}}
\newcommand{\Per}{\operatorname{Per}}
\newcommand{\PrePer}{\operatorname{PrePer}}
\newcommand{\PGL}{\operatorname{PGL}}
\newcommand{\Pic}{\operatorname{Pic}}
\newcommand{\Prob}{\operatorname{Prob}}
\newcommand{\Proj}{\operatorname{Proj}}
\newcommand{\Qbar}{{\overline{\QQ}}}
\newcommand{\rank}{\operatorname{rank}}
\newcommand{\Rat}{\operatorname{Rat}}
\newcommand{\red}{{\textup{red}}}
\newcommand{\Resultant}{\operatorname{Res}}
\renewcommand{\setminus}{\smallsetminus}
\newcommand{\sgn}{\operatorname{sgn}} 
\newcommand{\SL}{\operatorname{SL}}
\newcommand{\Span}{\operatorname{Span}}
\newcommand{\Spec}{\operatorname{Spec}}
\renewcommand{\ss}{\textup{ss}}
\newcommand{\stab}{\textup{stab}}
\newcommand{\Support}{\operatorname{Supp}}
\newcommand{\tors}{{\textup{tors}}}
\newcommand{\tr}{{\textup{tr}}} 
\newcommand{\Trace}{\operatorname{Trace}}
\newcommand{\trianglebin}{\mathbin{\triangle}} % symmetric set difference
\newcommand{\UHP}{{\mathfrak{h}}}    % Upper half plane
\newcommand{\<}{\langle}
\renewcommand{\>}{\rangle}

\newcommand{\pmodintext}[1]{~\textup{(mod}~#1\textup{)}}
\newcommand{\ds}{\displaystyle}
\newcommand{\longhookrightarrow}{\lhook\joinrel\longrightarrow}
\newcommand{\longonto}{\relbar\joinrel\twoheadrightarrow}
\newcommand{\SmallMatrix}[1]{%
  \left(\begin{smallmatrix} #1 \end{smallmatrix}\right)}

\newcommand{\TABT}[1]{\begin{tabular}[t]{@{}l@{}}#1\end{tabular}}
\newcommand{\TAB}[1]{\begin{tabular}{@{}l@{}}#1\end{tabular}}
\newcommand{\TABC}[1]{\begin{tabular}{@{}c@{}}#1\end{tabular}}
%%%%%%%%%%%%%%%%%%%%%%%%%%%%%%%%%%%%%%%%%%%%%%%%%%%%%%%%%%%%%%%%%%%%%%

\begin{abstract} 
Let~$\mathbb{F}$ be the function field of a curve over an
algebraically closed field with
$\operatorname{char}(\mathbb{F})\ne2,3$, and let~$E/\mathbb{F}$ be a
non-isotrivial elliptic curve. Then for all finite 
extensions~$\mathbb{K}/\mathbb{F}$ and all non-torsion points
$P\in{E(\mathbb{K})}$, the $\mathbb{F}$-normalized canonical height
of~$P$ is bounded below by
\[
\hat{h}_E(P) \ge \frac{1}{10500\cdot h_{\mathbb{F}}(j_E)^{2}\cdot [\mathbb{K}:\mathbb{F}]^{2}}.
\]
\end{abstract}

\maketitle

\tableofcontents

%%%%%%%%%%%%%%%%%%%%%%%%%%%%%%%%%%%%%%%%%%%%%%%%%%
\section{Introduction}
\label{sec:introduction}
%%%%%%%%%%%%%%%%%%%%%%%%%%%%%%%%%%%%%%%%%%%%%%%%%%

The classical Lehmer conjecture for number fields says that there is
an absolute constant~$\Cl{KoverQ}>0$ such that for all number
fields~$K/\QQ$ of degree~\text{$D=[K:\QQ]$} and all~$\a\in{K^*}$ that
are not roots of unity, the absolute logarithmic Weil height of~$\a$
satisfies
\[
h(\a) \ge \frac{\Cr{KoverQ}}{D}.
\]
The best known result~\cite{Dobrowolski} gives a lower bound with an
additional factor of~$(\log\log D/\log D)^3$, while the best known
results for the analogous conjecture for the canonical height on an
elliptic curve~$E$ and non-torsion point~$P\in{E(K)}$ are as follows:
\begin{align*}
  \hhat_E(P) &\ge \Cl{EoverK1}(E)\cdot D^{-3} (\log D)^{-2}
  &&\text{in general~\cite{Masser89},} \\
  \hhat_E(P) &\ge \Cl{EoverK2}(E)\cdot D^{-2} (\log D)^{-2}
  &&\text{if $j_E$ is non-integral~\cite{HS-STNP1990},} \\
  \hhat_E(P) &\ge \Cl{EoverK3}(E)\cdot D^{-1} (\log\log D/\log D)^{3}
  &&\text{if $E$ has CM~\cite{Laurent}.} 
\end{align*}
\par
In this paper we consider the function field analogue of Lehmer's
conjecture for elliptic curves. We set the following notation, which
will be used throughout this article.
\begin{equation}
  \label{eqn:definitions}
  \framebox{\parbox{.9\hsize}{%
    \begin{tabular}{c@{\quad}l}
    $k$ & 
      an algebraically closed field with $\characteristic(k)\ne2,3$. \\ 
    $\FF$ & 
      the function field of a smooth projective curve\\
      &defined over $k$. \\
    $h_\FF$ & 
      the absolute Weil height $h_\FF:\PP^n(\FFbar)\to[0,\infty)$. \\
    $E/\FF$ & 
      an elliptic curve defined over $\FF$ with $j_E\notin k$. \\ 
    $\hhat_{E/\FF}$ & 
      the absolute logarithmic canonical height on $E(\FFbar)$. \\
    \end{tabular}
}}
\end{equation}

\begin{conjecture}[Lehmer Conjecture for an Elliptic Curve over a Function Field]
\label{conjecture:lehmerecoverff}
With notation as in~\eqref{eqn:definitions},
there is a constant~$\Cl{lehmer}(E/\FF)>0$ such that
for all finite extensions~$\KK/\FF$ of degree~\text{$D=[\KK:\FF]\ge2$},
we have
\begin{equation}
  \label{eqn:lehmerecoverffest}
  \hhat_{E/\FF}(P) \ge \frac{\Cr{lehmer}(E/\FF)}{D}
  \quad\text{for all non-torsion $P\in E(\KK)$.}
\end{equation}
\end{conjecture}

Our main theorem gives a lower bound of the
form~$\Cl{lehmerx}(E/\FF)/D^2$, with an explicit value for the
constant~$\Cr{lehmerx}(E/\FF)$.
\begin{theorem}
\label{theorem-1}
With notation as in~\eqref{eqn:definitions},
let~$\KK/\FF$ be a finite extension of degree~\text{$D=[\KK:\FF]$}. Then
\[
\hhat_{E/\FF}(P) \ge \frac{1}{10500 \cdot h_\FF(j_E)^2\cdot D^2}
\quad\text{for all non-torsion $P\in E(\KK)$.}
\]
The constant can be improved for large values of~$D$. For example, 
\begin{multline*}
h_\FF(j_E)^{3/2} \cdot D \ge 50000 \\
\Longrightarrow\;
\hhat_{E/\FF}(P) \ge \frac{1}{4108 \cdot h_\FF(j_E)^2\cdot D^2}
\quad\text{for all non-torsion $P\in E(\KK)$.}
\end{multline*}
\end{theorem}

As noted in numerous papers, estimates as in Theorem~\ref{theorem-1}
are an immediate consequence of the following sort of counting result:

\begin{theorem}
\label{theorem-2}
With notation as in~\eqref{eqn:definitions},
let~$\KK/\FF$ be a finite extension of
degree~\text{$D=[\KK:\FF]$}. Then for all
\[
0<\d<1\quad\text{and}\quad 0<\e<1,
\]
we have
\begin{multline*}
  \#\left\{ P\in E(\KK) : \hhat_{E/\FF}(P) \le 
  \frac{1-\d}{2} \left( \frac{1}{2^{5}\cdot(1+\e)\cdot D} \right)^{2/3}
  \right\} \\
  \le
  \max\left\{\frac{12}{\e},\,
  \frac{h_\FF(j_E)}{\d} \left(2^5\cdot(1+\e)\cdot D\right)^{2/3}\right\}.
\end{multline*}
\end{theorem}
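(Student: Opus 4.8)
The plan is to treat $\hhat_{E/\FF}$ as a positive semidefinite quadratic form on the abelian group $E(\KK)$, which is finitely generated by the Lang--N\'eron theorem — this is exactly where the hypothesis $j_E\notin k$ is used — and to prove the counting bound by geometry of numbers on the Mordell--Weil lattice $\Lambda=\bigl(E(\KK)/E(\KK)_{\tors},\,\hhat_{E/\FF}\bigr)$. Writing $B$ for the height cutoff in the theorem, which is of order $D^{-2/3}$, the quantity to be bounded factors as
\[
\#\{P\in E(\KK):\hhat_{E/\FF}(P)\le B\}
=\#E(\KK)_{\tors}\cdot\#\{v\in\Lambda:\hhat_{E/\FF}(v)\le B\},
\]
so I must (i) bound the torsion and (ii) count the lattice points of $\Lambda$ of norm at most $\sqrt{B}$.

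First I would set up the arithmetic of the height through its N\'eron decomposition $\hhat_{E/\FF}=\sum_v\lhat_v$ on a minimal model of $E$ over the smooth projective model of $\KK$. At a place of good reduction $\lhat_v$ records half the pole order of $x(P)$; at a place of bad reduction $\lhat_v$ is a nonnegative rational quadratic function on the component group, with denominator dividing the number of fibral components. From this I extract three things. First, $\hhat_{E/\FF}(P)\le B$ forces $h_\FF\bigl(x(P)\bigr)\le 2B+O\bigl(h_\FF(j_E)\bigr)$ by the standard comparison between the canonical and naive heights, with a constant explicit in $h_\FF(j_E)$; this and boundedness of the local contributions control the torsion — a nonzero torsion point is nontrivial in the component groups at enough bad places — and also yield a lower bound for the first minimum $\lambda_1(\Lambda)$, the torsion count being exactly what the ``$12/\e$'' alternative is designed to absorb. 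Second, the associated elliptic surface $\mathcal E\to C_\KK$ has geometric Euler characteristic bounded by an absolute multiple of $\deg_\KK(j_E)=D\cdot h_\FF(j_E)$. Third — and this is the crucial structural point — by the Shioda--Tate description of $\Lambda$ as (up to normalization) the orthogonal complement of the trivial lattice inside $\operatorname{NS}(\mathcal E)$, the rank of $\Lambda$ and its regulator $\det\Lambda$ are controlled \emph{together} by the bad-fibre data: large fibral contributions simultaneously shrink $\rank\Lambda$ (they are absorbed into the trivial lattice) and enlarge $\det\Lambda$.

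With this in hand the count of short vectors is a lattice-point estimate: if $\mu_1\le\mu_2\le\cdots$ are the successive minima of $\Lambda$, then
\[
\#\{v\in\Lambda:\hhat_{E/\FF}(v)\le B\}\le\prod_{i:\,\mu_i\le 2\sqrt{B}}\Bigl(1+\tfrac{2\sqrt{B}}{\mu_i}\Bigr),
\]
and I would bound the right-hand side using Minkowski's second theorem together with the lower bound for $\det\Lambda$ and its coupling to $\rank\Lambda$, so that the product comes out of polynomial size — of order $h_\FF(j_E)\,D^{2/3}$ when $B$ is of order $D^{-2/3}$ — rather than exponentially large in the rank. I expect this rank-uniformity to be the main obstacle: over $\KK$ the Mordell--Weil rank can itself be as large as an absolute multiple of $D\cdot h_\FF(j_E)$, so a crude per-direction estimate is hopeless and the entire saving has to come from the regulator-versus-rank balance supplied by the surface, with the cutoff placed at order $D^{-2/3}$ precisely because that is the scale at which this balance is tight. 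What remains is bookkeeping of the numerical constant $2^5$ and a check that every local estimate is uniform over separable $\KK/\FF$; the two losses in the cutoff — the factor $1-\d$ and the factor $(1+\e)^{-2/3}$ — absorb, respectively, the discrepancy between the lattice-point count and the regulator bound and the contribution from torsion and very short vectors, which is why the final estimate appears as the maximum of a term in $1/\e$ and a term in $h_\FF(j_E)\,(2^5(1+\e)D)^{2/3}/\d$.
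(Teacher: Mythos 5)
There is a genuine gap, and it sits exactly at the step you flag as ``the main obstacle'' but then do not resolve: the count of short vectors in the Mordell--Weil lattice. Your estimate $\#\{v\in\Lambda:\hhat_{E/\FF}(v)\le B\}\le\prod_{\mu_i\le 2\sqrt B}(1+2\sqrt B/\mu_i)$ requires \emph{lower} bounds on the individual successive minima, and nothing in your toolkit supplies them. Minkowski's second theorem goes the wrong way: it bounds $\prod_i\mu_i$ from \emph{above} by (a constant times) the covolume, so a lower bound on $\det\Lambda$ together with Minkowski II cannot prevent $\mu_1$ (or the first several minima) from being tiny while the later minima are huge; and a lower bound on $\mu_1$ is essentially Theorem~1, the Lehmer-type bound one is trying to deduce from Theorem~2, so the argument as sketched is circular at its core. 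The asserted structural rescue --- ``large fibral contributions simultaneously shrink $\rank\Lambda$ and enlarge $\det\Lambda$'' --- is stated but never proved, and the Shioda--Tate determinant formula in fact pulls the other way: the orders of the component groups \emph{divide} the N\'eron--Severi determinant down, so large fibres tend to make the Mordell--Weil regulator smaller, not larger. Since the rank can be as large as a multiple of $D\cdot h_\FF(j_E)$, without a per-minimum lower bound the product above can a priori be exponential in the rank, far beyond the claimed $\max\{12/\e,\;h_\FF(j_E)(2^5(1+\e)D)^{2/3}/\d\}$. Two secondary points: the $12/\e$ term in the theorem is not a torsion allowance (in the paper it arises as a pigeonhole threshold $12M$ with $\e=M^{-1}$), and your claim that a nonzero torsion point must be nontrivial in component groups at ``enough'' bad places is unjustified.

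For contrast, the paper never introduces the global lattice at all. It fixes the single place $v'$ of $\FF$ maximizing $v(j_E^{-1})$, uses a pigeonhole argument on the component/theta group at one place $w'_0$ above $v'$ to extract $N+1$ points whose pairwise differences have $\lhat_{w'_0}\ge\frac1{24}v'(j_E^{-1})$, bounds the summed local heights at the other places above $v'$ by the Fourier--averaging (Bernoulli polynomial) estimate of Hindry--Silverman, discards only a term of size $\frac{N+1}{12}h_\FF(j_E)$ from the remaining bad places, and then applies the elementary inequality $\a e_0+\b\sum_i e_i^{-1}\ge(\tfrac{27}{4}\a^2\b\sum_i e_i)^{1/3}$ with $\sum e_i=D$ plus the parallelogram law to turn this into an upper bound on $N$. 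That local, place-by-place route is what makes the bound uniform in the rank --- precisely the uniformity your lattice-point approach leaves unproven.
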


The proof of Theorem~\ref{theorem-2} uses Fourier averaging ideas
as in~\cite{HS-STNP1990}, with some care taken to optimize the argument
and to keep track of how the constants depend on the curve~$E$.

\begin{remark}
\label{remark:philosophy}
We remark that the classical Lehmer conjecture for~$\KK^*$ is trivial
in the function field setting. Philosophically, we view the difficulty
in Lehmer's conjecture as coming from places of persistant bad reduction. The
group scheme~$\GG_m$ has everywhere good reduction over a function
field, whence the triviality of a Lehmer estimate.  However, we also
philosophically view~$\GG_m$, and indeed all abelian group schemes, as
having persistant bad reduction at all archimedean places, which explains why
Lehmer estimates are non-trivial for~$\GG_m$ over number fields. As we
shall see, the challenge in proving a Lehmer-type estimate for an
elliptic curve over a function field comes almost entirely from the
places of multiplicative reduction, i.e., from the places of stable
bad reduction.
\end{remark}

\begin{remark}
Conjecture~\ref{conjecture:lehmerecoverff} and
Theorems~\ref{theorem-1} and~\ref{theorem-2} all include the
assumption that~$E/\FF$ is not isotrivial, i.e., they assume
that~$j_E\notin{k}$. This is necessary, since the conjecture is false
in general if~$j_E\in{k}$; but if we restrict to extensions~$\KK/\FF$
such that~$E/\KK$ is not split, then Conjecture~\ref{conjecture:lehmerecoverff}
is true and relatively easy to prove. See Section~\ref{section:finalremarks0}
for details.
\end{remark}

We briefly describe the contents of this paper.
Section~\ref{sec:notation} sets notation, and in particular explains
how absolute heights are normalized over the base function
field~$\FF$.  Section~\ref{sec:localcanonicalheights} recalls the
decomposition of the canonical height into local heights and gives
lower bounds for local heights.  Section~\ref{sec:reprise} gives two
useful elliptic curve results from~\cite{HS-STNP1990}, together with a
strengthened version of a numerical inequality appearing
in~\cite{HS-STNP1990} and~\cite[Lemma~9.4]{LS-TransAMS2024}.  The
proof of the latter is deferred to Appendix~\ref{section:inquality}.
Section~\ref{sec:proofsmainthm} contains the proofs of our main
results.  In Section~\ref{section:finalremarks0} we illustrate the
philosophy described in Remark~\ref{remark:philosophy} by
analyzing  isotrivial~$E$.
Finally, in Appendices~\ref{section:localhtImstar}
and~\ref{section:localhtImstarintersection}, we give two derivations
of formulas for the local height in the case of additive, potentially
multiplicative, reduction, i.e., reduction of type~$I_M^*$, the first
via explicit Weierstrass equations and Tate's algorithm, the second
via intersection theory. We include this material partly because it is
not readily available in the literature, and partly to illustrate and
contrast the two approaches.

%%%%%%%%%%%%%%%%%%%%%%%%%%%%%%%%%%%%%%%%%%%%%%%%%%%%%%%%%%%%%%%%%%%%%%
\section{Notation}
\label{sec:notation}
%%%%%%%%%%%%%%%%%%%%%%%%%%%%%%%%%%%%%%%%%%%%%%%%%%%%%%%%%%%%%%%%%%%%%%

In addition to the notation set in~\eqref{eqn:definitions},
we set the following notation.
The field~$\FF$ is the function field of a smooth projective curve,
and each point on that curve gives rise to a normalized valuation
\begin{equation}
  \label{eqn:ordvFstarontoZ}
  \ord_v : \FF^* \longonto \ZZ.
\end{equation}
We let
\[
M_\FF = \text{the set of normalized valuations as described by \eqref{eqn:ordvFstarontoZ}.}
\]
IN the terminology of Lang~\cite{MR715605}, the
set~$M_\FF$ is a proper set of absoltue values on~$\FF$ satisfying the
product formula with all multiplicities equal to~$1$, and thus there is a
general theory of Weil and N{\'e}ron--Tate height functions
over~$\FF$.  (See also Remark~\ref{remark:properabsvals} at the end of
this section.)
\par
For a finite extension~$\KK/\FF$, we define~$M_\KK$ to be the set
of valuations on~$\KK$ that extend the valuations in~$M_\FF$, i.e., an
element~$w\in{M_\KK}$ is a valuation
\[
w : \KK^*\longrightarrow\QQ\quad\text{satisfying}\quad w|_{\FF} \in M_\FF.
\]
For~$w\in{M_\KK}$ and~$v\in{M_\FF}$, we write~\text{$w\mid{v}$} to
indicate that~$w|_\FF=v$.
\par
We denote the local index of~$w$ over~$w|_\FF$ by\footnote{We note that our
assumptions imply that for the residue field of~$w\in{M_\KK}$ is
isomorphic to the algebraically closed field~$k$, so writing~$\KK_w^s$
for the maximimal separable subextension of~$\KK/\FF$, the local
degree~$e(w)$ is equal to the product of the ramification degree
of~$\KK_w^s/\FF$ and the inseperable degree of the
extension~$\KK/\FF$.}
\[
e(w) = [\KK_w:\FF_w],
\]
and then we have
\begin{equation}
  \label{eqn:sumeifi}
  \sum_{\substack{w\in M_\KK\\ w\mid v\\}} e(w) = [\KK:\FF]
  \quad\text{for all $v\in M_\FF$.}
\end{equation}
We note that normalized valuation associated to~$w$
is related to~$w$ via
\[
\ord_w:\KK^*\longonto\ZZ,\quad \ord_w = e(w)w.
\]

\begin{definition}
Let~$\KK/\FF$ be a finite extension, and 
let~$P=[\a_0,\ldots,\a_n]\in\PP^n(\KK)$.  The \emph{$\FF$-normalized
height of~$P$} is\footnote{By convention we set $w(0^{-1})=0$.}
\begin{equation}
  \label{eqn:defweilht}
  h_\FF(P) := \frac{1}{[\KK:\FF]} \sum_{w\in M_\KK} e(w) \max_{0\le i\le n} w(\a_i^{-1}).
\end{equation}
The height is invariant under field extensions, so it extends to a
well-defined function
\[
h_\FF : \PP^n(\FFbar) \longrightarrow [0,\infty).
\]
\end{definition}
%%%%%%%%%%%%%%%%%%%%%%%%%%%%%%%%%%%%%%%% 
%% {\par\noindent\hrule height8pt depth-7.2pt width\hsize\relax
%%   \par\vspace*{5pt minus 5pt}\noindent}
%% \textbf{Check}: Let~$\LL/\KK$ and let~$P\in\PP^n(\KK)$. Then
%% \begin{align*}
%%   \frac{1}{[\LL:\FF]} & \sum_{w\in M_\LL} e(w)\max_{0\le i\le n} w(\a_i^{-1}) \\
%%   & = \frac{1}{[\LL:\FF]} \sum_{v\in M_\KK} \sum_{\substack{w\in M_\LL\\w\mid v\\}}
%%   e(w)\max_{0\le i\le n} w(\a_i^{-1}) \\
%%   & = \frac{1}{[\LL:\FF]} \sum_{v\in M_\KK}
%%   \biggl(\sum_{\substack{w\in M_\LL\\w\mid v\\}} e(w)\biggr)
%%   \max_{0\le i\le n} v(\a_i^{-1})
%%   \quad\text{since $\a_i\in \KK$ and $w|_\KK=v$,} \\
%%   & = \frac{1}{[\LL:\FF]} \sum_{v\in M_\KK} [\LL:\KK] \max_{0\le i\le n} v(\a_i^{-1}) \\
%%   & = \frac{1}{[\KK:\FF]} \sum_{v\in M_\KK} \max_{0\le i\le n} v(\a_i^{-1}).
%% \end{align*}
%% {\par\noindent\hrule height8pt depth-7.2pt width\hsize\relax
%%   \par\vspace*{5pt minus 5pt}\noindent}
%%%%%%%%%%%%%%%%%%%%%%%%%%%%%%%%%%%%%%%%

\begin{definition}
Let~$x$ be the~$x$-coordinate on a Weierstrass equation for~$E/\FF$,
i.e., let~\text{$x:E\to\PP^1$} be a non-constant global section
of the sheaf~$\Ocal_{E/\FF}\bigl(2(O)\bigr)$.  Then the \emph{canonical height}
on~$E(\FFbar)$ is defined by the usual limit
\[
\hhat_{E/\FF} : E(\FFbar)\longrightarrow[0,\infty),\quad
\hhat_{E/\FF}(P) := \lim_{n\to\infty} 4^{-n}h_\FF\bigl(x(2^nP)\bigr).
\]
\end{definition}

\begin{remark}
\label{remark:properabsvals}
For the convenience of the reader, we recall some terminology
from~\cite{MR715605}.  A non-archmedean absolute value~$v$ on a
field~$\FF$ is \emph{proper} if it is non-trivial and
satisfies~$[\KK:\FF]=\sum_{w\mid{v}}[\KK_w:\FF_v]$ for every finite
extension~$\KK/\FF$.  A set of non-archmedean absolute values~$M_\FF$
on~$\FF$ is \emph{proper} if (1)~every~$v\in{M_\FF}$ is proper;
(2)~distinct elements of~$M_\FF$ are independent; and (3)~for
all~$\a\in\FF^*$, we have~$|\a|_v=1$ for all but finitely
many~$v\in{M_\FF}$. The set~$M_\FF$ satisfies the \emph{product
formula with multiplicities~$\mu_v$}
if~$\prod_{v\in{M_K}}|\a|_v^{\mu_v}=1$ for all~$\a\in\FF^*$.  Then for
finite~$\KK/\FF$, the set of absolute values~$M_\KK$ on~$\KK$
extending the absolute values on~$\FF$ is proper and satisfies the
product formula with multiplicities~\text{$[\KK_w:\FF_v]\mu_v$}
for~$w\mid{v}$.
\end{remark}

%%%%%%%%%%%%%%%%%%%%%%%%%%%%%%%%%%%%%%%%%%%%%%%%%%%%%%%%%%%%%%%%%%%%%%
\section{A Parallelogram Law Canonical Height Inequality}
\label{section:parallelogramlawht}
%%%%%%%%%%%%%%%%%%%%%%%%%%%%%%%%%%%%%%%%%%%%%%%%%%%%%%%%%%%%%%%%%%%%%%
We use the parallelogram law for the canonical height~\cite[Theorem~VIII.9.3]{AEC} to
derive an inequality that will be applied later.
\begin{lemma}
\label{lemma:hhatineq}
Let~$P_1,\ldots,P_N\in{E(\FFbar)}$. Then
\begin{equation}
  \label{eqn:maxhhatPisumPjPk}
  \max_{0\le i\le N} \hhat_{E/\FF}(P_i)
  \ge \frac{1}{2(N+1)^2}\sum_{\substack{j,k=0\\ j\ne k\\}}^N  \hhat_{E/\FF}(P_j-P_k).
\end{equation}
Further, the inequality~\eqref{eqn:maxhhatPisumPjPk} is an equality if
and only if
\[
\hhat_{E/\FF}(P_0)=\cdots=\hhat_{E/\FF}(P_N)\quad\text{and}\quad
P_0+\cdots+P_N\in{E(\FFbar)_\tors}.
\]
\end{lemma}
\begin{proof}
We denote the bilinear form associated to the canonical height by
\[
\<P,Q\>_{E/\FF} := \frac12 \Bigl( \hhat_{E/\FF}(P+Q)-\hhat_{E/\FF}(P)-\hhat_{E/\FF}(Q) \Bigr).
\]
We compute
\begin{align}
  \smash[b]{ \sum_{\substack{j,k=0\\ j\ne k\\}}^N  \hhat_{E/\FF}(P_j-P_k) }
  &=  \sum_{j,k=0}^N  \hhat_{E/\FF}(P_j-P_k)  \quad\text{since $\hhat_{E/\FF}(O)=0$,} \notag\\*
  &= \sum_{j,k=0}^N
  \Bigl(\hhat_{E/\FF}(P_j)-2\bigl\<P_j,P_k\bigr\>_{E/\FF}+\hhat_{E/\FF}(P_k)\Bigr) \notag\\*
  &= \sum_{j,k=0}^N  \Bigl(\hhat_{E/\FF}(P_j)+\hhat_{E/\FF}(P_k)\Bigr)
  - 2 \left\<\sum_{j=0}^N P_j,\sum_{k=0}^N P_k\right>_{\!\!E/\FF} \notag\\
  &= 2(N+1) \sum_{i=0}^N \hhat_{E/\FF}(P_i)
  - 2\hhat_{E/\FF}\biggl(\sum_{i=0}^N P_i\biggr) \notag\\*
  \label{eqn:maxhhatPisumPjPk1}
  &\le 2(N+1) \sum_{i=0}^N \hhat_{E/\FF}(P_i) \quad\text{since $\hhat_{E/\FF}\ge0$,}\\*
  \label{eqn:maxhhatPisumPjPk2}
  &\le 2(N+1)^2 \max_{0\le i\le N} \hhat_{E/\FF}(P_i).
\end{align}
This proves~\eqref{eqn:maxhhatPisumPjPk}. We conclude by noting that
the inequality~\eqref{eqn:maxhhatPisumPjPk1} is an equality if and
only if~$\hhat_{E/\FF}(\sum{P_i})=0$, so if and only
if~$\sum{P_i}\in{E_\tors}$, and the
inequality~\eqref{eqn:maxhhatPisumPjPk2} is an equality if and only
if the~$\hhat_{E/\FF}(P_i)$ are all equal.
\end{proof}
%%%%%%%%%%%%%%%%%%%%%%%%%%%%%%%%%%%%%%%%%%%%%%%%%%%%%%%%%%%%%%%%%%%%%%
\section{Local Canonical Heights}
\label{sec:localcanonicalheights}
%%%%%%%%%%%%%%%%%%%%%%%%%%%%%%%%%%%%%%%%%%%%%%%%%%%%%%%%%%%%%%%%%%%%%%

We recall the principal properties of local canonical heights
on elliptic curves.

\begin{theorem}
\label{theorem:localhtexist}
\textup{(N{\'e}ron--Tate)}
With notation as in~\eqref{eqn:definitions},
we fix  a Weierstrass equation
\begin{equation}
  \label{eqn:WE1}
  E : y^2+a_1xy+a_3y=x^3+a_2x^2+a_4x+a_6
\end{equation}
for~$E/\FF$,  we let~$\Delta$ be the
associated discriminant~\cite[III~\S1]{AEC},
and we let~$\KK/\FF$ be a finite  extension. 
\begin{parts}
\Part{(a)}
For each~$w\in{M_\KK}$, there is a unique local canonical
height\footnote{The elliptic curve and base field are implicit in the
notation, but should we need to specify them, we will write
$\lhat_{E/\FF,w}$.}
\[
\lhat_w : E(\KK_w)\setminus0 \longrightarrow\RR
\]
having the following three properties\textup{:}
\begin{parts}
\Part{(i)}
$\lhat_w$ is continuous and bounded on the complement of any~$w$-adic
neighborhood of~$0$.
\Part{(ii)}
The limit
\[
  \lim_{\substack{P\to0\\\textup{$w$-adic}\\}}
  \left[ \lhat_w(P) + \frac{1}{2} w\bigl(x(P)\bigr)\right]
\]
exists.
\Part{(iii)}
For all~$P\in{E(\KK_w)}$ with~$2P\ne0$, we have
\[
\lhat_w(2P) =
4\lhat_w(P) + \frac{1}{4} w\left( \frac{\bigl(2y(P)+a_1x(P)+a_3\bigr)^4}{\D} \right).
\]
\end{parts}
\Part{(b)}
With local heights normalized as in~\textup{(a)}, we have
\begin{equation}
  \label{eqn:hhatPsumlhatwP}
  \hhat(P) = \frac{1}{[\KK:\FF]} \sum_{w\in M_\KK} e(w)\lhat_w(P)
  \quad\text{for all $P\in{E(\KK)}\setminus0$.}
\end{equation}
\Part{(c)}
Let~$\LL/\KK$ be a finite extension, let~$w\in{M_\KK}$, and
let~$u\in{M_\LL}$ with~\text{$u\mid{w}$}. Then
\[
\lhat_u(P) = \lhat_w(P) \quad\text{for all $P\in E(\KK_w)\subseteq E(\LL_u)$.}
\]
\end{parts}
\end{theorem}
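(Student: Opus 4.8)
The plan is to recognize parts~(a) and~(b) as the Néron--Tate theory of local canonical heights, carried out over the completions~$\KK_w$, and to deduce~(c) formally from the uniqueness in~(a). Everything is non-archimedean, so the construction is the classical non-archimedean one; the hypothesis $\characteristic(k)\ne2,3$ enters only to furnish a good Weierstrass model, to make $[2]$ separable, and to give the harmless normalizations $w(2)=w(3)=0$. For the existence half of~(a) I would fix a Weierstrass equation~\eqref{eqn:WE1} and build $\lhat_w$ by a telescoping series attached to the duplication law~(iii). Starting from the naive local height
\[
\l^{(0)}_w(P):=\tfrac12\max\bigl\{0,\,-w(x(P))\bigr\},
\]
which by~\eqref{eqn:defweilht} is the $w$-contribution to the Weil height of $x(P)$, one checks --- using the explicit duplication formula for~$x$, the expansions $x=z^{-2}(1+\cdots)$ and $y=-z^{-3}(1+\cdots)$ in a uniformizer~$z$ at~$O$, and the cancellation of the would-be poles of its two terms along~$E[2]$ --- that
\[
\d_w(P):=\l^{(0)}_w(2P)-4\l^{(0)}_w(P)-\tfrac14\,w\!\left(\frac{\bigl(2y(P)+a_1x(P)+a_3\bigr)^4}{\D}\right)
\]
is bounded on $E(\KK_w)\setminus\{O\}$. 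One then puts
\[
\lhat_w(P):=\l^{(0)}_w(P)+\sum_{n\ge0}4^{-(n+1)}\,\d_w(2^nP),
\]
the series converging by boundedness of~$\d_w$; reindexing it gives~(iii), the expansions at~$O$ give~(ii), and~(i) holds since $\lhat_w-\l^{(0)}_w$ is uniformly bounded while $\l^{(0)}_w$ is continuous and bounded off any $w$-adic neighborhood of~$O$ (a priori the series defines $\lhat_w$ away from the $2$-power torsion, and it extends uniquely by continuity because the tail is uniformly small near each point $\ne O$).

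Uniqueness in~(a) is the usual telescoping argument: if $\lhat_w,\lhat_w'$ both satisfy~(i)--(iii), then $\f:=\lhat_w-\lhat_w'$ is continuous on $E(\KK_w)\setminus\{O\}$, extends to~$O$ with $\f(O)=0$ by~(ii), is bounded by~(i), and satisfies $\f(2P)=4\f(P)$ by~(iii); iterating gives $\f(P)=4^{-N}\f(2^NP)\to0$ at every point of infinite order, and since those are dense in $E(\KK_w)$ (the torsion is countable and $E(\KK_w)$ has no isolated points) continuity forces $\f\equiv0$. Part~(c) is then immediate: with $u\mid w$ as in the statement one has $\KK_w\hookrightarrow\LL_u$, the restriction of~$u$ to~$\KK$ equals~$w$, and $x,y,\D,a_i$ are unchanged, so the restriction of $\lhat_u$ to $E(\KK_w)\setminus\{O\}$ again satisfies~(i)--(iii) over~$\KK_w$ (continuity and boundedness because $E(\KK_w)$ carries the subspace topology), whence it equals $\lhat_w$ by uniqueness.

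For part~(b), set $\ell(P):=[\KK:\FF]^{-1}\sum_{w\in M_\KK}e(w)\lhat_w(P)$, a finite sum because $\lhat_w(P)=0$ at every place of good reduction at which the chosen equation is minimal and~$P$ avoids the zero-section. Summing~(iii) over~$w$ with weights~$e(w)$ and using $e(w)w=\ord_w$, the correction terms collapse to $\tfrac14\sum_w\ord_w\bigl((2y(P)+a_1x(P)+a_3)^4/\D\bigr)$, which is~$0$ by the product formula on~$\KK$ (its argument lies in~$\KK^*$ for non-torsion~$P$); hence $\ell(2P)=4\ell(P)$. Since $\lhat_w-\l^{(0)}_w$ is bounded and vanishes for all but finitely many~$w$, formulas~\eqref{eqn:defweilht} and~\eqref{eqn:sumeifi} show that $\ell$ and $h_\FF\!\circ\!x$ differ by a bounded amount, and the standard estimate $\hhat_{E/\FF}-h_\FF\!\circ\!x=O(1)$ --- immediate from $\hhat_{E/\FF}(P)=\lim_n4^{-n}h_\FF(x(2^nP))$ together with the fact that $x(2P)$ is a degree-$4$ rational function of $x(P)$ --- then gives that $\ell-\hhat_{E/\FF}$ is bounded. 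A bounded function that scales by~$4$ under~$[2]$ and vanishes on torsion is identically~$0$, so $\ell=\hhat_{E/\FF}$; vanishing of~$\ell$ at a torsion point~$T$ follows by passing, via part~(c), to a finite extension~$\LL/\KK$ with $2^nS=T$ for some $S\in E(\LL)$ and noting $\ell(T)=4^n\ell(S)$ while $\ell(S)=O(1)$ uniformly in~$n$.

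The main obstacle is the existence half of~(a): verifying that $\d_w$ is genuinely bounded --- in particular the cancellation of singularities along~$E[2]$ and at~$O$, and the bookkeeping at the finitely many places of bad or non-minimal reduction --- and that the limiting construction produces a continuous function. This is precisely the part of the classical local-field treatment of Néron heights that has to be transcribed to the function-field setting; because all places are non-archimedean and $\characteristic\ne2,3$, the transcription is routine, and in a published proof one could simply invoke the classical construction and record only the modifications needed for~$\FF$.
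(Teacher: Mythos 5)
Your parts (a) and (c) are essentially fine, and your (c) is literally the paper's argument: the restriction of $\lhat_u$ to $E(\KK_w)$ satisfies (i)--(iii), so uniqueness forces $\lhat_u|_{E(\KK_w)}=\lhat_w$. Where you differ from the paper is that the paper does not reconstruct the local theory at all: (a) and (b) are quoted from \cite[Theorem~VI.1.1]{ATAEC} and \cite[Theorem~VI.2.1]{ATAEC}, with only the added remark that $x^6/\Delta$ is independent of the Weierstrass model, so that (ii) and (iii) are well posed. Your telescoping construction of $\lhat_w$ from the naive local height, and your uniqueness argument via boundedness, the scaling relation, and density of points of infinite order (torsion is countable, $E(\KK_w)$ is a perfect complete metric space), are the standard non-archimedean arguments; flagging the boundedness of $\d_w$ as the real work is accurate, and that part is acceptable as a sketch.

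There is, however, a genuine gap in your proof of (b), at torsion points killed by a power of $2$. Writing $g=\ell-\hhat_{E/\FF}$, your telescoping $g(P)=4^{-N}g(2^NP)\to 0$ covers every $P$ whose forward $2$-orbit avoids $O$ (note this already includes all odd-order torsion, since such points never meet $E[2]$ and their orbit is finite), but for $T\in E[2^\infty]\setminus\{O\}$ you pass to $S$ with $2^nS=T$ and argue from ``$\ell(T)=4^n\ell(S)$ and $\ell(S)=O(1)$''. That inference runs the scaling in the wrong direction: those two facts give only $|\ell(T)|\le 4^nC$, i.e.\ nothing; equivalently $\ell(S)=4^{-n}\ell(T)$ is bounded no matter what $\ell(T)$ is. Division points make the \emph{descendants} small; they say nothing about $T$ itself. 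To close the gap you need an additional input, for instance: establish the local quasi-parallelogram law (as in \cite[Theorem~VI.1.1]{ATAEC}) and sum it over places, so the product formula kills the correction and gives $\ell(P+T)+\ell(P-T)=2\ell(P)+2\ell(T)$; then choosing $P$ non-torsion over a finite extension with $P\pm T$ non-torsion and using $\ell=\hhat_{E/\FF}$ off torsion yields $\ell(T)=0=\hhat_{E/\FF}(T)$. Alternatively, derive a triplication relation for $\lhat_w$ from the uniqueness in (a) and telescope with $[3]$, which never carries a $2$-power torsion point to $O$; or simply quote \cite[Theorem~VI.2.1]{ATAEC}, as the paper does.
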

\begin{proof}
(a)\enspace
We refer the reader to~\cite[Theorem~VI.1.1]{ATAEC} for a proof of~(a).
We remark that using the normalization provided by~(iii), the limit
in~(ii) can be computed explicitly and shown to
equal~$\frac{1}{12}w(\D)$; see Remark~\ref{remark:limlhatwvalue} at
the end of this section for a proof.  We also note that the
quantities~$x^6/\D$ and~$(2y+a_1x+a_3)^4/\Delta$ are well-defined
functions on~$E$ that do not depend on the choice of Weierstrass
equation, since any other Weierstrass equation for~$E$ is related
via
\[
x=u^2x',\quad y=u^3y',\quad a_1=ua_1',\quad a_3=u^3a_3',\quad \Delta=u^{12}\Delta'
\]
for some non-zero~$u$; see~\cite[III~\S1]{AEC}.  Thus the existence of
the limit in~(ii) and the equality in~(iii) are independent of the
choice of Weierstrass equation.
\par\noindent(b)\enspace
This is proved for number fields
in~\cite[Theorem~VI.2.1]{ATAEC}. However, the proof does not need the
number field assumption. It relies only on the following facts: (1)~$\KK/\FF$ is
an extension of fields with absolute values satisfying the product
formula; (2)~$\lhat_w$ has the properties described in~(a); (3)~the
formula for~$\lhat_w(P)$ at points
in~$E_0(\KK_w)$, which is proven for general complete fields in~\cite[Theorem~VI.4.1]{ATAEC}.
\par\noindent(c)\enspace
We have normalized the valuations so that the restriction of~$u$
to~$\KK_w$ is equal to~$w$. Hence the restriction of~$\lhat_u$
to~$E(\KK_w)$ satisfies~(i)--(iii), so by the uniqueness of~$\lhat_w$,
we have~$\lhat_u\big|_{E(\KK_w)}=\lhat_w$.
\end{proof}

We next give two definitions and then recall/prove some explicit
formulas and estimates for local canonical heights on elliptic curves.

\begin{definition}
\label{definition:Jsubw}
For~$w\in{M_\KK}$ and~$v\in{M_\FF}$ with~\text{$w\mid{v}$}, we
define\footnote{We remark that in~\cite{HS-STNP1990}, our
quantity~$J_w$ was denoted by~$\nu_w$. We have switched notation,
because~$\nu$ and~$v$ look too much alike.}
\begin{align*}
  J_w &:= \max\bigl\{ 0, \ord_w(j_E^{-1}) \bigr\} \\
  &= \max\bigl\{ 0, e(w)\ord_v(j_E^{-1}) \bigr\} \quad\text{since $j_E\in \FF$,}\\
  &= e(w) \max\bigl\{ 0,\ord_v(j_E^{-1}) \bigr\} \\
  &= e(w)\cdot \max\bigl\{ 0,v(j_E^{-1}) \bigr\} \quad\text{since $v=\ord_v$ for $v\in M_\FF$.}
\end{align*}
\end{definition}

\begin{definition}
The \emph{periodic second Bernoulli polynomial} is the function
\[
\Bernoulli_2 : \RR \longrightarrow \RR/\ZZ \longrightarrow \RR,
\]
defined by
\[
\Bernoulli_2(t) = t^2 - t + \frac16\quad\text{for $0\le t\le 1$,}
\]
and extended periodically by requiring that
\[
\Bernoulli_2(t+n) = \Bernoulli_2(t) \quad\text{for all $t\in\RR$ and all $n\in\ZZ$.}
\]
\end{definition}

\begin{lemma}
\label{lemma:potentialgoodreduction}
With notation as in~\eqref{eqn:definitions},
let \text{$v\in{M_\FF}$},
let~$\KK/\FF$ be a finite  extension, 
let~\text{$w\in{M_\KK}$} with~\text{$w\mid{v}$},
and let
\[
w(\Dcal_{E/\KK_w}) = \text{valuation of the minimal discriminant of $E$ at~$w$.}
\]
\vspace*{-10pt}
\begin{parts}
\Part{(a)}
For~$w\in{M_\KK}$, let~$E_\nu(\KK_w)$ for~$\nu\ge0$ be the formal group filtration
of~$E(\KK_w)$, so in particular~$E(\KK_w)/E_0(\KK_w)$ is the group of components of
the N{\'e}ron model of~$E$ at~$w$. Then for~$\nu\ge0$ we have
\begin{equation}
  \label{eqn:lhatwa1}
  \lhat_w(P) = \nu + \frac{1}{12}w(\Dcal_{E/\KK_w})
  \quad\text{for all $P\in E_\nu(\KK_w)\setminus E_{\nu+1}(\KK_w)$.}
\end{equation}
In particular, if~$E$ has additive or good reduction at~$w$, then
\begin{equation}
  \label{eqn:lhatwa2}
  \lhat_w(Q) \ge \frac{1}{12}w(\Dcal_{E/\KK_w}) \quad\text{for all $Q\in12E(\KK_w)$.}
\end{equation}
\Part{(b)}
If \text{$v(j_E)\ge0$}, or equivalently if~$E$ has potential good reduction at~$v$,
then
\[
\lhat_w(P) \ge 0 \quad\text{for all $P\in E(\KK_w)\setminus0$.}
\]
\Part{(c)}
If
\[
w(\Dcal_{E/\KK_w}) = v(j_E^{-1}) > 0, 
\]
or equivalently if~$E$ has multiplicative reduction at~$w$, then
\[
\lhat_w(P)\ge \frac12 \Bernoulli_2\bigl(\f_w(P)\bigr) v(j_E^{-1}),
\]
where~$\f_w$ is a homomorphism
\[
\f_w : E(\KK_w) \longrightarrow \tfrac{1}{J_w}\ZZ/\ZZ \subset \QQ/\ZZ.
\]
\Part{(d)}
If
\[
w(\Dcal_{E/\KK_w}) - 6 = v(j_E^{-1}) > 0,
\]
or equivalently if~$E$ has additive, potential multiplicative reduction, at~$w$, then
\[
\lhat_w(P)\ge \frac12 \Bernoulli_2\bigl(\f_w(P)\bigr) v(j_E^{-1}),
\]
where~$\f_w$ is a homomorphism
\[
\f_w : E(\KK_w) \longrightarrow \tfrac12\ZZ/\ZZ \subset \QQ/\ZZ.
\]
\end{parts}
\end{lemma}
\begin{proof}
(a)\enspace
Let~$x$ be the~$x$-coordinate on a minimal Weierstrass equation for~$E$ at~$w$.
Then~\cite[Theorem~VI.4.1]{ATAEC} says that
\[
\lhat_w(P) = \frac12\max\bigl\{ w\bigl(x(P)^{-1}\bigr), 0\bigr\} + \frac{1}{12}w(\Dcal_{E/\KK})
\quad\text{for all $P\in E_0(\KK_w)$.}
\]
The filtration of~$E(\KK_w)$ is given by~\cite[Chapter~VII]{AEC}
\[
P\in E_\nu(\KK_w) \quad\Longleftrightarrow\quad
\frac12\max\bigl\{ w\bigl(x(P)^{-1}\bigr), 0\bigr\} \ge \nu,
\]
which gives~\eqref{eqn:lhatwa1}.
\par
We next observe that the list of Kodaira--N{\'e}ron fibers for
additive reduction implies that\footnote{More precisely,
\par\noindent\hbox to\textwidth{\hfill
$\begin{array}{|c||c|c|c|c|c|} \hline
  \text{Type} & \text{II \&\ II$^*$} & \text{III \&\ III$^*$}
  & \text{IV \&\ IV$^*$} & \text{I$_{2M}^*$} & \text{I$_{2M+1}^*$} \\ \hline
  E(\KK_w)/E_0(\KK_w) & 1 & \ZZ/2\ZZ & \ZZ/3\ZZ & \ZZ/2\ZZ\times\ZZ/2\ZZ
  & \ZZ/4\ZZ \\ \hline
\end{array}$\hfill}
}
\[
12P\in E_0(\KK_w) \quad\text{for all $P\in E(\KK_w)$;}
\]
see~\cite[IV~\S8, Figure~4.4]{ATAEC} or~\cite{Antwerp4Tate}.
Hence we can apply~\eqref{eqn:lhatwa1} to points in~$12E(\KK)$, which
gives~\eqref{eqn:lhatwa2}.
\par\noindent(b)\enspace
The assumption that~$v(j_E)\ge0$ implies that~$E$ has potential good
reduction at~$v$~\cite[Proposition~VII.5.5]{AEC}, so we can find a
finite (separable) extension~$\LL/\KK$ and a valuation~$w'\in{M_\LL}$ 
satisfying~\text{$w'\mid{w}\mid{v}$} such that~$E$ has good reduction
at~$w'$.\footnote{The separability follows from the fact that~$\LL$ can
be generated over~$\KK$ by adjoining square and cube roots, and we
have assumed that~$\characteristic(\FF)\ne2,3$.}
It follows from~\cite[Theorem~VI.4.1]{ATAEC} (see
also~\cite[Remark~VI.4.1.1]{ATAEC}) that if we take a minimal
Weierstrass equation for~$E/\LL_u$, then
\[
\lhat_{w'}(P) = \frac12 \max\Bigl\{ w'\bigl(x(P)^{-1}\bigr),\,0\Bigr\}.
\quad\text{for all $P\in E(\LL_{w'})\setminus0$.}
\]
In particular, we see that
\[
\lhat_{w'}(P) \ge 0 \quad\text{for all $P\in E(\LL_u)\setminus0$.}
\]
We also know that the local height is invariant under field extension
(Theorem~\ref{theorem:localhtexist}(c)),
i.e.,~$\lhat_{w'}|_{E(\KK_w)}=\lhat_w$, which allows us to conclude that
\[
\lhat_w(P) \ge 0 \quad\text{for all $P\in E(\KK_w)\setminus0$.}
\]
\par\noindent(c)\enspace
This follows from Tate's explicit formula for the local height in the case of
multiplicative reduction. See~\cite[Theorem~VI.4.2]{ATAEC} for a proof
for~$p$-adic fields, i.e., for finite extensions of~$\QQ_p$. However,
we note that the proof
in~\cite{ATAEC} requires only Tate's parameterization of
multiplicative reduction elliptic curves, and such parameterizations
exist for all complete non-archimedean
local fields. See for example~\cite{RoquetteTateCurve,TateTateCurve}.
\par\noindent(d)\enspace
Our assumptions say that~$E$ has reduction type~$I_M^*$ at~$w$ for some
integer~\text{$M\ge1$},
where\footnote{We note that~$M\ne0$ since~$I_0^*$ is a potential good reduction and
we've assumed potential multiplicative reduction. And the equalities in~\eqref{eqn:IMstarstuff}
are valid since~$\characteristic(\KK)\ne2$.}
\begin{equation}
  \label{eqn:IMstarstuff}
  M = w(\Dcal_{E/\KK_w}) - 6 = w(j_E^{-1}) = v(j_E^{-1}).
\end{equation}
\par
The special fiber of the N{\'e}ron model for~$I_M^*$ reduction
consists of four multiplicity-$1$ components; see
Figure~\ref{figure:IMstar} in Appendix~\ref{section:localhtImstar}, where we have labeled the
components~$\bfzero$,~$\bfalpha$,~$\bfbeta$,
and~$\bfbeta'$. We remark that~$E_0(\KK_w)$ is the set
of points that intersect the~$\bfzero$-component.  
Formulas for~$\lhat_w(P)$ may be found in various sources, but for
completeness we give two proofs, one in
Appendix~\ref{section:localhtImstar} using explicit Weierstrass equations,
and one in Appendix~\ref{section:localhtImstarintersection}
using intersection theory. These formulas imply that
\begin{align*}
  \lhat_w(P) &\ge 0
  && \text{if $P$ hits the $\bfzero$-component,} \\
  \lhat_w(P) &= \frac{1}{12}v(j_E^{-1})
  && \text{if $P$ hits the $\bfalpha$-component,} \\
  \lhat_w(P) &= -\frac{1}{24}v(j_E^{-1})
  && \text{if $P$ hits the $\bfbeta$ or $\bfbeta'$-component.} 
\end{align*}
Finally, we note that the group of components~$E(\KK_w)/E_0(\KK_w)$ is
either~$(\ZZ/2\ZZ)^2$ of~$(\ZZ/4\ZZ)$ depending on whether~$M$ is even or odd,
with the $\bfbeta$ and $\bfbeta'$-components being the elements of order~$4$ in
the latter case. These formulas give the desired result. 
\end{proof}

\begin{remark}
\label{remark:limlhatwvalue}
We conclude this section by evaluating the limit
\[
L_w :=
\lim_{\substack{P\to0\\\textup{$w$-adic}\\}}
\left[ \lhat_w(P) + \frac{1}{2} w\bigl(x(P)\bigr)\right]
\]
that appears in Theorem~\ref{theorem:localhtexist}(a-ii).  We compute
\begin{align*}
  L_w
  &=
  \lim_{\substack{P\to0\\\textup{$w$-adic}\\}}
  \left\{-\frac13\left[ \lhat_w(2P) + \frac{1}{2} w\bigl(x(2P)\bigr)\right]
  +\frac43\left[ \lhat_w(P) + \frac{1}{2} w\bigl(x(P)\bigr)\right]\right\} \\
  &\omit\hfill\text{since~$P\to0$ implies that~$2P\to0$,
  so the limit is $-\frac13L_w+\frac43L_w$} \\
  &= 
  \lim_{\substack{P\to0\\\textup{$w$-adic}\\}}
  \left\{-\frac13\left[
    4\lhat_w(P) + \frac{1}{4} w\left( \frac{\bigl(2y(P)+a_1x(P)+a_3\bigr)^4}{\D} \right)
    + \frac{1}{2} w\bigl(x(2P)\bigr)\right] \right.\\
  &\omit\hfill$\displaystyle{}+
  \left. \frac43\left[ \lhat_w(P) + \frac{1}{2} w\bigl(x(P)\bigr)\right]\right\}$
  \quad\text{from Theorem~\ref{theorem:localhtexist}(a-iii),}\\
  &=
  \lim_{\substack{P\to0\\\textup{$w$-adic}\\}}
  \frac{1}{12} w\left( \frac{\D x(P)^8}{\bigl(2y(P)+a_1x(P)+a_3\bigr)^4x(2P)^2} \right) \\
  &=
  \lim_{\substack{P\to0\\\textup{$w$-adic}\\}}
  \frac{1}{12} w\left( \frac{\D x(P)^8}{\bigl( x(P)^4-b_4x(P)^2-2b_6x(P)-b_8 \bigr)^2} \right) \\*
  &\omit\hfill\text{duplication formula \cite[Algorithm~III.2.3(d)]{AEC},} \\[-2\jot]
  &= \frac{1}{12} w(\D).
\end{align*}
\end{remark}

%%%%%%%%%%%%%%%%%%%%%%%%%%%%%%%%%%%%%%%%%%%%%%%%%%%%%%%%%%%%%%%%%%%%%%
\section{Reprise of Material from Previous Papers}
\label{sec:reprise}
%%%%%%%%%%%%%%%%%%%%%%%%%%%%%%%%%%%%%%%%%%%%%%%%%%%%%%%%%%%%%%%%%%%%%%

\begin{lemma}
\label{lemma:stnp-prop1.2}
\textup{(\cite[Proposition~1.2]{HS-STNP1990})}
With notation as in~\eqref{eqn:definitions},
let \text{$v\in{M_\FF}$} with \text{$v(j_E^{-1})>0$},
let~$\KK/\FF$ be a finite  extension,
let~\text{$w\in{M_\KK}$} with~\text{$w\mid{v}$}, and
let
\[
P_0,\ldots,P_N\in{E}(\KK_w)
\]
be distinct points. Then
\[
\sum_{\substack{j,k=0\\j\ne k\\}}^N \lhat_w(P_j-P_k)
\ge
\frac{1}{12}\left(\frac{N+1}{J_w}\right)^2 v(j_E^{-1}) - \frac{N+1}{12}v(j_E^{-1}),
\]
where~$J_w=e(w)v(j_E^{-1})$ is as in Definition~$\ref{definition:Jsubw}$.
\end{lemma}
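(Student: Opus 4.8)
The plan is to convert the left–hand sum into a sum of values of the periodic Bernoulli polynomial~$\Bernoulli_2$ and then bound that sum below by a Fourier positivity argument. Since $v(j_E^{-1})>0$, the curve $E$ has potentially multiplicative reduction at $v$, so at the place $w\mid v$ it has either multiplicative reduction or additive potentially multiplicative reduction, and accordingly Lemma~\ref{lemma:potentialgoodreduction}(b) or~(c) applies. Treating the multiplicative case first (the additive one is handled the same way, see below), we obtain a homomorphism
\[
\f_w:E(\KK_w)\longrightarrow\tfrac1{J_w}\ZZ/\ZZ\subset\QQ/\ZZ,\qquad J_w=e(w)v(j_E^{-1}),
\]
together with the pointwise bound $\lhat_w(Q)\ge\tfrac12\Bernoulli_2(\f_w(Q))\,v(j_E^{-1})$ for all $Q\in E(\KK_w)\setminus 0$. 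Applying this with $Q=P_j-P_k$ and writing $t_i:=\f_w(P_i)\in\tfrac1{J_w}\ZZ/\ZZ$, the homomorphism property gives $\f_w(P_j-P_k)=t_j-t_k$, and since $\Bernoulli_2(0)=\tfrac16$ we are reduced (using $v(j_E^{-1})>0$ to preserve the inequality) to the claim
\[
\sum_{j,k=0}^N\Bernoulli_2(t_j-t_k)\ \ge\ \frac{(N+1)^2}{6J_w^2}.
\]
Granting this, $\sum_{j\ne k}\lhat_w(P_j-P_k)\ge\tfrac12 v(j_E^{-1})\bigl(\sum_{j,k}\Bernoulli_2(t_j-t_k)-(N+1)\Bernoulli_2(0)\bigr)$, and substituting the bound yields exactly the asserted inequality after simplification.

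For the Bernoulli sum I would use the absolutely convergent Fourier expansion $\Bernoulli_2(t)=\frac1{2\pi^2}\sum_{n\ne0}e^{2\pi i n t}/n^2$. Interchanging the finite and infinite sums and regrouping the terms gives
\[
\sum_{j,k=0}^N\Bernoulli_2(t_j-t_k)=\frac1{2\pi^2}\sum_{n\ne0}\frac1{n^2}\Bigl|\sum_{j=0}^N e^{2\pi i n t_j}\Bigr|^2\ \ge\ 0,
\]
and since each $t_j\in\tfrac1{J_w}\ZZ$, the inner exponential sum equals $N+1$ whenever $J_w\mid n$. Discarding the nonnegative terms with $J_w\nmid n$ leaves
\[
\sum_{j,k=0}^N\Bernoulli_2(t_j-t_k)\ \ge\ \frac{(N+1)^2}{2\pi^2}\sum_{\substack{n\ne0\\ J_w\mid n}}\frac1{n^2}
=\frac{(N+1)^2}{2\pi^2J_w^2}\cdot\frac{\pi^2}{3}=\frac{(N+1)^2}{6J_w^2},
\]
which is the bound needed above. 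In the additive case of Lemma~\ref{lemma:potentialgoodreduction}(c) the same argument runs verbatim with $\f_w$ taking values in $\tfrac12\ZZ/\ZZ$; alternatively one may pass to a quadratic extension $\LL/\KK$ at which $E$ acquires multiplicative reduction and use the invariance $\lhat_u|_{E(\KK_w)}=\lhat_w$ of Theorem~\ref{theorem:localhtexist}(c).

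The analytic core — the Fourier interchange, the sum-of-squares positivity, and the diagonal/divisibility extraction — is routine. The step needing genuine care is the passage through Lemma~\ref{lemma:potentialgoodreduction}: one must check that $E$ really falls under case~(b) or~(c) at $w$ (it does, since $v(j_E^{-1})>0$ rules out potential good reduction), and, more importantly, that the values $\f_w(P_0),\dots,\f_w(P_N)$ all lie in a single cyclic group $\tfrac1{J_w}\ZZ/\ZZ$, so that the condition $J_w\mid n$ selects precisely the maximal exponential sums. Keeping track of the correct modulus in the additive reduction case (type $I_M^*$) and of the relation among $J_w$, $e(w)$, and $v(j_E^{-1})$ is the only place where one has to be careful about the valuation normalizations.
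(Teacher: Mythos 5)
Your argument is essentially the paper's: the paper simply cites \cite[Proposition~1.2]{HS-STNP1990}, whose content is exactly the Fourier computation you reconstruct (the expansion of $\Bernoulli_2$, the positivity of $\bigl|\sum_j e^{2\pi i n t_j}\bigr|^2$, and extraction of the terms with $J_w\mid n$), applied to the lower bound of Lemma~\ref{lemma:potentialgoodreduction}(b). Your handling of the additive, potentially multiplicative, case via the homomorphism into $\tfrac12\ZZ/\ZZ$ is likewise exactly what the paper does, yielding the estimate with $2$ in place of $J_w$, which is the bound the paper records there.
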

\begin{proof}
This is proved in~\cite[Proposition~1.2]{HS-STNP1990} in the case
that~$E$ has multiplicative reduction at~$w$, using the lower bound
for~$\lhat_w$ in Lemma~\ref{lemma:potentialgoodreduction}(c) in terms
of~$\Bernoulli_2$ and the homomorphism
\[
\f_w : E(\KK_w) \longrightarrow J_w^{-1}\ZZ/\ZZ \subset \QQ/\ZZ.
\]
\par
In the case that~$E$ has additive, potential multiplicative,
reduction, we can use the analogous lower bound for~$\lhat_w$ in
Lemma~\ref{lemma:potentialgoodreduction}(d) and repeat the proof
in~\cite[Proposition~1.2]{HS-STNP1990} to obtain the stronger lower bound
\begin{align*}
\sum_{\substack{j,k=0\\j\ne k\\}}^N \lhat_w(P_j-P_k)
&\ge\frac{1}{12}\left(\frac{N+1}{2}\right)^2 v(j_E^{-1}) - \frac{N+1}{12}v(j_E^{-1}) \\*
&= \frac{(N+1)(N-3)}{48}v(j_E^{-1}).
\qedhere
\end{align*}
\end{proof}

\begin{lemma}
\label{lemma:stnp-prop1.3}
\textup{(\cite[Proposition~1.3]{HS-STNP1990})}
With notation as in~\eqref{eqn:definitions},
let \text{$v\in{M_\FF}$}, and let~\text{$w\in{M_\KK}$}
with~\text{$w\mid{v}$}.  Let~\text{$A\ge1$} be an integer, and
let
\[
Q_0,\ldots,Q_{6AN}\in{E(\KK_w)}
\]
be distinct points. Then there is a subset
\[
\{P_0,\ldots,P_N\} \subseteq \{Q_0,\ldots,Q_{6AN}\} 
\]
such that
\begin{equation}
  \label{eqn:lwPjPggeddd}
  \lhat_w(P_j-P_k) \ge \frac{1-A^{-1}}{12}\cdot \max\bigl\{v(j_E^{-1}),0\bigr\}
\quad\text{for all $j\ne k$.}
\end{equation}
\end{lemma}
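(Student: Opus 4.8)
The plan is to mirror \cite[Proposition~1.3]{HS-STNP1990}: first reduce to the case of potential multiplicative reduction at~$w$, and there run a one-dimensional pigeonhole argument on the image of the component-group homomorphism~$\f_w$, combined with the elementary lower bound for~$\Bernoulli_2$ near the integers.

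First I would dispose of the potential good reduction case. If $v(j_E)\ge0$, then $\max\{v(j_E^{-1}),0\}=0$, so \eqref{eqn:lwPjPggeddd} merely asserts $\lhat_w(P_j-P_k)\ge0$. Since $A\ge1$ we have $6AN+1\ge N+1$, so we may take $\{P_0,\dots,P_N\}$ to be any $N+1$ of the (distinct) points $Q_i$, and Lemma~\ref{lemma:potentialgoodreduction}(a) supplies $\lhat_w\ge0$ on $E(\KK_w)\setminus0$. So assume henceforth that $v(j_E^{-1})>0$; then $E$ has potential multiplicative reduction at~$w$, hence reduction type $I_M$ or $I_M^*$, and in either case Lemma~\ref{lemma:potentialgoodreduction}(b) or~(c) provides a homomorphism $\f_w:E(\KK_w)\to\QQ/\ZZ$ with
\[
\lhat_w(P)\ \ge\ \tfrac12\,\Bernoulli_2\!\bigl(\f_w(P)\bigr)\,v(j_E^{-1})\qquad\text{for every }P\in E(\KK_w)\setminus0 .
\]

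The combinatorial core is then as follows. Regard each $\f_w(Q_i)$ as a point of $\RR/\ZZ\cong[0,1)$ and partition $[0,1)$ into the $6A$ half-open arcs $I_r=[\tfrac r{6A},\tfrac{r+1}{6A})$, $0\le r<6A$. Were each arc to contain at most $N$ of the $6AN+1$ values $\f_w(Q_0),\dots,\f_w(Q_{6AN})$, the arcs would account for at most $6AN<6AN+1$ of them; so some $I_{r_0}$ contains at least $N+1$. Choose $N+1$ of the $Q_i$ whose $\f_w$-values lie in $I_{r_0}$ and relabel them $P_0,\dots,P_N$. For $j\ne k$ the point $P_j-P_k$ is nonzero, and since $\f_w(P_j),\f_w(P_k)$ lie in a common arc of length $\tfrac1{6A}$, the element $\f_w(P_j-P_k)=\f_w(P_j)-\f_w(P_k)$ of $\RR/\ZZ$ has a representative $s$ with $|s|<\tfrac1{6A}$.

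Finally I would combine this with the elementary identity $\Bernoulli_2(s)=\tfrac16-|s|(1-|s|)$ valid for $|s|\le\tfrac12$, which gives $\Bernoulli_2(s)\ge\tfrac16-|s|>\tfrac16-\tfrac1{6A}=\tfrac{1-A^{-1}}{6}$; multiplying by $\tfrac12 v(j_E^{-1})>0$ and using the displayed lower bound yields
\[
\lhat_w(P_j-P_k)\ \ge\ \tfrac12\,\Bernoulli_2\!\bigl(\f_w(P_j-P_k)\bigr)\,v(j_E^{-1})\ \ge\ \frac{1-A^{-1}}{12}\,v(j_E^{-1}),
\]
which is \eqref{eqn:lwPjPggeddd} since $v(j_E^{-1})=\max\{v(j_E^{-1}),0\}$ in this case. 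I do not expect a genuine obstacle here: the argument is essentially bookkeeping, and the two sensitive points are (i) the pigeonhole count, where the number $6AN$ of test points and the number $6A$ of arcs are tuned precisely so that one arc holds $N+1$ points while the arc length $\tfrac1{6A}$ feeds into the target constant $\tfrac{1-A^{-1}}{12}$, and (ii) the $\Bernoulli_2$ estimate, where one should be mildly careful to treat $s\in[0,\tfrac12]$ and $s\in[-\tfrac12,0]$ consistently via periodicity.
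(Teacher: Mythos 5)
Your proof is correct, and its conceptual core — pigeonhole on the image of $\f_w$ in $\RR/\ZZ$ combined with the monotonicity of $\Bernoulli_2$ near $0$ — is exactly the argument of \cite[Proposition~1.3]{HS-STNP1990}, which the paper simply cites for the multiplicative ($I_M$) case rather than reproving. Where you differ organizationally is the additive, potentially multiplicative ($I_M^*$) case: you run the same ``$6A$ arcs of length $\frac{1}{6A}$'' argument uniformly, whereas the paper observes that here $\f_w$ lands in $\frac12\ZZ/\ZZ$, pigeonholes $2N+1$ points into the two cosets of an index-$2$ subgroup $\G\subseteq E(\KK_w)$, and thereby forces all pairwise differences into $\G$, giving the strictly stronger bound $\lhat_w(P_j-P_k)\ge\frac{1}{12}v(j_E^{-1})$ while only consuming $2N+1$ of the available points. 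Your uniform treatment proves exactly the stated inequality and is perfectly adequate (indeed, since $A\ge1$ the arcs of length $\le\frac16$ already separate $0$ from $\frac12$, so your argument implicitly recovers the sharper constant in the $I_M^*$ case); the paper's split just makes the strengthening explicit. One small bookkeeping note: you speak of ``$6AN$ test points'' in your closing remark, but as your pigeonhole computation correctly uses, there are $6AN+1$ points $Q_0,\ldots,Q_{6AN}$, and it is that extra $+1$ which forces some arc to receive $N+1$ of them.
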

\begin{proof}
If~$E$ has multiplicative reduction at~$w$, then
Lemma~\ref{lemma:stnp-prop1.3} is proven
in~\cite[Proposition~1.3]{HS-STNP1990}.
So we may suppose that~$E$ has good or additive reduction at~$w$.
\par
If~$E$ has potential good reduction, i.e., if~$v(j_E)\ge0$,
then~\eqref{eqn:lwPjPggeddd} is the assertion
that~$\lhat_w(P_j-P_k)\ge0$. This is an immediate consequence of
Lemma~\ref{lemma:potentialgoodreduction}(b), and any subset
of~$\{Q_0,\ldots,Q_{6AN}\}$ will suffice.
\par
If~$E$ has additive, potential multiplicative, reduction, then
Lemma~\ref{lemma:potentialgoodreduction}(d) implies that there is a
subgroup~$\G\subset{E(\KK_w)}$ of index~$2$ with the property
that\footnote{We could alternatively use the fact that the group of
components on the N{\'e}ron model has order~$4$, but that would lose a
factor of~$2$.}
\[
\lhat_w(P)\ge\frac12\Bernoulli(0)v(j_E^{-1})
= \frac{1}{12}v(j_E^{-1})
\quad\text{for all $P\in\G$.}
\]
We map
\[
\{Q_0,\ldots,Q_{2N}\} \longrightarrow E(\KK_w)/\G
\]
and use the pigeon-hole principle to find a subset
\[
\{P_0,\ldots,P_N\} \subseteq \{Q_0,\ldots,Q_{2N}\}
\quad\text{such that}\quad
P_j-P_k\in \G \quad\text{for all $j\ne k$.}
\]
This yields
\[
\lhat_w(P_j-P_k) \ge \frac{1}{12}v(j_E^{-1})
\quad\text{for all $j\ne k$,}
\]
which is stronger than the stated result.
\end{proof}

\begin{lemma}
\label{lemma:LS-Lemma9.4}
Let~$\a,\b,e_0,e_1,\ldots,e_r$ be positive real numbers satisfying
\[
e_0 = \max\{e_0,\ldots,e_r\}.
\]
Then
\begin{equation}
  \label{eqn:ae0sumboverei274}
  \a e_0 + \b \sum_{i=1}^r \frac{1}{e_i}
  \ge
  \biggl( \frac{27}{4}\a^2\b\sum_{i=0}^r e_i \biggr)^{1/3}.
\end{equation}
\end{lemma}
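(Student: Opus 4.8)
The plan is to deduce \eqref{eqn:ae0sumboverei274} from the three-term AM--GM inequality together with the hypothesis that $e_0$ is the largest of the $e_i$. The constant $27/4=3^{3}/2^{2}$ is precisely what AM--GM produces after splitting a term in the ratio $\tfrac12:\tfrac12:1$, so the first move is to write $\a e_0=\tfrac12\a e_0+\tfrac12\a e_0$ and to treat $\b\sum_{i=1}^{r}e_i^{-1}$ as a single block.

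Applying AM--GM to the three positive reals $\tfrac12\a e_0$, $\tfrac12\a e_0$, and $\b\sum_{i=1}^{r}e_i^{-1}$ yields
\[
\a e_0+\b\sum_{i=1}^{r}\frac{1}{e_i}
\ \ge\ 3\left(\frac{\a^{2}e_0^{2}}{4}\cdot\b\sum_{i=1}^{r}\frac{1}{e_i}\right)^{\!1/3}
\ =\ \left(\frac{27}{4}\,\a^{2}\b\;e_0^{2}\sum_{i=1}^{r}\frac{1}{e_i}\right)^{\!1/3},
\]
so it remains to establish the purely multiplicative inequality $e_0^{2}\sum_{i=1}^{r}e_i^{-1}\ge\sum_{i=0}^{r}e_i$. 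Here the maximality of $e_0$ enters through the two termwise estimates $e_0^{2}/e_i\ge e_i$ and $e_0^{2}/e_i\ge e_0$, both immediate from $e_i\le e_0$. Summing the first over $i=1,\ldots,r$ already gives $e_0^{2}\sum_{i=1}^{r}e_i^{-1}=\sum_{i=1}^{r}(e_0^{2}/e_i)\ge\sum_{i=1}^{r}e_i$, that is, the asserted bound with the right-hand sum truncated to $i\ge1$; what then remains is to account for the extra summand $e_0$ coming from the $i=0$ term.

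This last point is the one I expect to require real care. Each inequality used above is an equality only in a rigid configuration --- AM--GM exactly when $\a e_0=2\b\sum_{i=1}^{r}e_i^{-1}$, and $e_0^{2}/e_i\ge e_i$ exactly when $e_i=e_0$ for every $i$ --- so there is almost no slack, and one must choose how to spend the two termwise bounds over the $r$ terms (and dispose of small values of $r$, and the degenerate case $r=0$, separately) in order to reach $\sum_{i=0}^{r}e_i$ on the nose, or else to pin down the precise form of the right-hand sum that the method actually delivers. This delicacy is presumably why the verification is relegated to Appendix~\ref{section:inquality}.
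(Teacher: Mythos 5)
Your AM--GM step is correct, and it is a genuinely different and more elementary route than the paper's: the paper minimizes the homogeneous ratio $\bigl(\alpha x_0+\beta\sum_{i\ge1}x_i^{-1}\bigr)/\bigl(\sum_{i\ge0}x_i\bigr)^{1/3}$ over the region $\{0\le x_i\le x_0\}$ by ruling out interior critical points, pushing the minimum to the diagonal $x_0=x_1=\cdots=x_r$, and evaluating there, whereas you get the constant $27/4$ in one line from the $\tfrac12:\tfrac12:1$ split. Combined with the termwise bound $e_0^2/e_i\ge e_i$, your argument does prove
\[
\alpha e_0+\beta\sum_{i=1}^r\frac{1}{e_i}\ \ge\ \Bigl(\tfrac{27}{4}\,\alpha^2\beta\sum_{i=1}^r e_i\Bigr)^{1/3}.
\]

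However, the step you flagged as delicate---replacing $\sum_{i=1}^r e_i$ by $\sum_{i=0}^r e_i$---is not a matter of spending the termwise estimates more cleverly: it cannot be done, because the inequality as stated is false. The multiplicative inequality $e_0^2\sum_{i=1}^r e_i^{-1}\ge\sum_{i=0}^r e_i$ fails whenever all the $e_i$ are equal (it reads $re_0\ge(r+1)e_0$), and at the AM--GM equality configuration this failure is fatal: take $r=1$, $e_0=e_1=1$, $\alpha=1$, $\beta=\tfrac12$; then the left side of \eqref{eqn:ae0sumboverei274} is $\tfrac32$ while the right side is $(27/4)^{1/3}>\tfrac32$. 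For fixed $r$ the sharp constant is $\tfrac{27}{4}\cdot\tfrac{r}{r+1}$ (so a bound uniform in $r\ge1$ requires $27/8$ in place of $27/4$), and this is exactly what your method delivers, since on the diagonal $\tfrac{r}{r+1}\sum_{i=0}^r e_i=\sum_{i=1}^r e_i$. The paper's own appendix in fact computes this same sharp value $\bigl(\tfrac{27}{4}\alpha^2\beta/(1+r^{-1})\bigr)^{1/3}$ and then ``lets $r\to\infty$ to obtain a lower bound that holds for all $r$''; but the per-$r$ minimum increases to that limit, so that final step goes the wrong way. Your instinct to ``pin down the precise form the method actually delivers'' was the right one: the truncated sum (equivalently the $r/(r+1)$ factor) is what is true, and it is that form which should be fed into the proof of Theorem~\ref{theorem-2}, at the cost of slightly worse numerical constants.
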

\begin{proof}
This was proven in~\cite{HS-STNP1990} with unspecified constants and
in~\cite[Lemma~9.4]{LS-TransAMS2024} without the~$27/4$ factor. We use
a different approach in Appendix~\ref{section:inquality} to
prove~\eqref{eqn:ae0sumboverei274}, as well as to eliminate an
annoying extra assumption required in the proof
of~\cite[Lemma~9.4]{LS-TransAMS2024}.  We also show that any
lower bound of the form~$C(\a,\b)(\sum{e_i})^{1/3+\e}$ with a constant that
is independent of~$r$ and~$e_0,\ldots,e_r$ necessarily
has~$\e\le0$, and that for~$\e=0$, we necessarily
have~$C(\a,\b)\le{C}(\a^2\b)^{1/3}$ for some absolute constant~$C$.
\end{proof}

%%%%%%%%%%%%%%%%%%%%%%%%%%%%%%%%%%%%%%%%%%%%%%%%%%%%%%%%%%%%%%%%%%%%%%
\section{Proofs of the Main Theorems}
\label{sec:proofsmainthm}
%%%%%%%%%%%%%%%%%%%%%%%%%%%%%%%%%%%%%%%%%%%%%%%%%%%%%%%%%%%%%%%%%%%%%%

\begin{proof}[Proof of Theorem~$\ref{theorem-2}$]
We set the notation
\begin{equation}
  \label{eqn:SigmaB}
  \Sigma(B) = \Sigma(B,E/\FF,\KK) := \bigl\{ P\in E(\KK) : \hhat_{E/\FF}(P) \le B \bigr\}.
\end{equation}
We choose a place~$v'\in{M_\FF}$ satisfying
\[
v'(j_E^{-1}) = \max_{v\in M_\FF} v(j_E^{-1}) \ge 1,
\]
where the lower bound follows from the assumption that~$j_E\notin{k}$.
Let
\[
w'_0,w'_1,\ldots,w'_r\in{M_\KK}
\]
be the places of~$K$ that lie over~$v'$, so in particular
\begin{equation}
  \label{eqn:Jwprimeiew}
  J_{w'_i} = e(w'_i)\cdot v'(j_E^{-1}) > 0.
\end{equation}
For convenience, we relabel the~$w'_i$ so that
\[
J_{w'_0} = \max_{0\le i\le r} J_{w'_i}.
\]
\par
We let
\[
B \quad\text{and}\quad N \ge M
\]
be parameters that we will specify later.
Suppose that we have distinct points
\[
Q_0,Q_1,\ldots,Q_{12N} \subset \Sigma(B).
\]
We apply Lemma~\ref{lemma:stnp-prop1.3} with~$v'$ and~$w'_0$
and~\text{$A=2$} to find distinct points
\[
P_0,\ldots,P_N \subseteq \{Q_0,Q_1,\ldots,Q_{12N}\}
\]
satisfying
\begin{equation}
  \label{eqn:stnp-prop1.3}
  \lhat_{w_0'}(P_j-P_k) \ge \frac{1}{24}\cdot v'(j_E^{-1}) > 0
\quad\text{for all $j\ne k$.}
\end{equation}
\par
Recalling that~$D=[\KK:\FF]$, we compute
\begin{align}
\sum_{\substack{j,k=0\\j\ne k\\}}^N & \hhat_{E/\FF}(P_j-P_k)  \\
&= \sum_{\substack{j,k=0\\j\ne k\\}}^N \frac{1}{D} \cdot \sum_{w\in M_\KK} e(w) \lhat_w(P_j-P_k)
\quad\text{from \eqref{eqn:hhatPsumlhatwP},} \notag\\
&\ge \sum_{\substack{j,k=0\\j\ne k\\}}^N \frac{1}{D} \cdot
\sum_{\substack{w\in M_\KK\\ w(j_E)<0\\}} e(w) \lhat_w(P_j-P_k)
\quad\TABT{from Lemma~\ref{lemma:potentialgoodreduction}(b), which\\
says that $\lhat_w\ge0$ if $w(j_E)\ge0$,\\} \notag\\
&= \frac{1}{D}
\sum_{\substack{w\in M_\KK\\ w(j_E)<0\\}} e(w)
\biggl( \sum_{\substack{j,k=0\\j\ne k\\}}^N \lhat_w(P_j-P_k) \biggr) \notag\\
&= \frac{1}{D}
\sum_{\substack{v\in M_\FF\\ v(j_E)<0\\}} 
\sum_{\substack{w\in M_\KK\\ w\mid v\\}} e(w)
\biggl( \sum_{\substack{j,k=0\\j\ne k\\}}^N \lhat_w(P_j-P_k) \biggr) \notag\\
&
\label{eqn:wsum1}
=\frac{1}{D} e(w'_0) \biggl( \sum_{\substack{j,k=0\\j\ne k\\}}^N \lhat_{w'_0}(P_j-P_k) \biggr) \\
&\label{eqn:wsum2}
\qquad{}+\frac{1}{D}\sum_{i=1}^r e(w'_i)
\biggl( \sum_{\substack{j,k=0\\j\ne k\\}}^N \lhat_{w'_i}(P_j-P_k) \biggr) \\
&\label{eqn:wsum3}
\qquad{}+\frac{1}{D}
\sum_{\substack{v\in M_\FF\\ v(j_E)<0\\ v\ne v'\\}} \sum_{\substack{w\in M_\KK\\ w\mid v\\}} e(w)
\biggl( \sum_{\substack{j,k=0\\j\ne k\\}}^N \lhat_w(P_j-P_k) \biggr).
\end{align}
\par
We use~\eqref{eqn:stnp-prop1.3} to estimate~\eqref{eqn:wsum1}, we use
Proposition~\ref{lemma:stnp-prop1.2} to estimate~\eqref{eqn:wsum2}, and we use
Proposition~\ref{lemma:stnp-prop1.2} without the positive term on the
right-hand side to estimate~\eqref{eqn:wsum3}. This yields
\begin{align}
  \sum_{\substack{j,k=0\\j\ne k\\}}^N & \hhat_{E/\FF}(P_j-P_k) \notag\\
  &\ge
  \frac{1}{D} e(w'_0) \sum_{\substack{j,k=0\\j\ne k\\}}^N \frac{1}{24}\cdot v'(j_E^{-1}) \notag\\
  &\qquad{}+\frac{1}{D}\sum_{i=1}^r
  e(w'_i) \left(\frac{1}{12}\left(\frac{N+1}{J_{w'_i}}\right)^2 v'(j_E^{-1})
  - \frac{N+1}{12}v'(j_E^{-1})\right) \notag\\
  &\qquad{}+\frac{1}{D}
  \sum_{\substack{v\in M_\FF\\ v(j_E)<0\\ v\ne v'\\}} \sum_{\substack{w\in M_\KK\\ w\mid v\\}} e(w)
  \cdot\left(- \frac{N+1}{12}v(j_E^{-1})\right) \notag\\
  &
  \label{eqn:wsum4}
  \ge \left(\frac{N^2+N}{24D}\cdot v'(j_E^{-1})\right) \cdot e(w'_0)
  \\*
  &
  \label{eqn:wsum5}
  \qquad{} +\frac{(N+1)^2}{12Dv'(j_E^{-1})}\cdot \sum_{i=1}^r \frac{1}{e(w'_i)} \\*
  &  \label{eqn:wsum6}
  \qquad{} - \frac{N+1}{12D}
  \sum_{\substack{v\in M_\FF\\ v(j_E)<0\\}} \sum_{\substack{w\in M_\KK\\ w\mid v\\}} e(w) \cdot v(j_E^{-1}) \\*
  &\omit\hfill\text{using~$J_{w'_i}=e(w'_i)\cdot{v'(j_E^{-1})}$ from~\eqref{eqn:Jwprimeiew}.}  \notag
\end{align}
We evaluate~\eqref{eqn:wsum6} (with minus sign omitted) as
\begin{align*}
  \frac{N+1}{12D}
  & \sum_{\substack{v\in M_\FF\\ v(j_E)<0\\}} \sum_{\substack{w\in M_\KK\\ w\mid v\\}} e(w) \cdot v(j_E^{-1}) \\
  &= \frac{N+1}{12D} \sum_{\substack{v\in M_\FF\\ v(j_E)<0\\}} D\cdot v(j_E^{-1})
  \quad\text{from~\eqref{eqn:sumeifi},} \\
  &= \frac{N+1}{12} h_\FF(j_E)
  \quad\TABT{from the definition~\eqref{eqn:defweilht} of height,\\
    where we note that $j_E^{-1}\in{\FF}$\\ and that $h_\FF(j_E^{-1})=h_\FF(j_E)$.\\}
\end{align*}
To ease notation, we let
\begin{equation}
  \label{eqn:alphabeta}
  \a = \frac{N^2+N}{24D} \cdot v'(j_E^{-1})
  \quad\text{and}\quad
  \b = \frac{(N+1)^2}{12Dv'(j_E^{-1})}.
\end{equation}
Then our inquality becomes
\begin{equation}
\label{eqn:sumjkgestuff1}
\sum_{\substack{j,k=0\\j\ne k\\}}^N \hhat_{E/\FF}(P_j-P_k) 
\ge \a e(w_0') + \b \sum_{i=1}^r \frac{1}{e(w'_i)} - \frac{N+1}{12} h_\FF(j_E).
\end{equation}
We are going to use Lemma~\ref{lemma:LS-Lemma9.4} to estimate the
first two terms, where in the lemma we take
\[
e_i=e_{w'_i},
\quad\text{so we have}\quad
e_0+\cdots+e_r=D.
\]
This yields
\begin{align}
\label{eqn:hPjPkN2N12}
\sum_{\substack{j,k=0\\j\ne k\\}}^N & \hhat_{E/\FF}(P_j-P_k) \notag\\
& \ge \left(\frac{27}{4}\a^2\b D\right)^{1/3} - \frac{N+1}{12} h_\FF(j_E)
\quad\text{from Lemma~\ref{lemma:LS-Lemma9.4} and~\eqref{eqn:sumjkgestuff1},}
\notag\\
&= \left( \frac{27}{4} \frac{(N^2+N)^2}{(24D)^2} \cdot v'(j_E^{-1})^2
\cdot \frac{(N+1)^2}{12Dv'(j_E^{-1})} \cdot D \right)^{1/3} - \frac{N+1}{12} h_\FF(j_E) \notag\\
&\omit\hfill\quad\text{using the values~\eqref{eqn:alphabeta} of~$\a$ and~$\b$,}\notag\\
&\ge (N+1)^2 \left( \left( \frac{N^2}{2^{10}\cdot(N+1)^2\cdot D^2} \right)^{1/3}  
- \frac{h_\FF(j_E)}{12(N+1)}  \right) \notag\\
&\omit\hfill\text{simplifying and using $v'(j_E^{-1})\ge1$,} \notag\\
&\ge (N+1)^2 \left( \left( \frac{1}{2^{5}\cdot(1+M^{-1})\cdot D} \right)^{2/3}  
- \frac{h_\FF(j_E)}{12(N+1)}  \right) \notag\\*
&\omit\hfill\text{simplifying and using  $N\ge{M}$.}
\phantom{\ref{eqn:hPjPkN2N12}\hspace*{2em}}
\end{align}
Hence 
\begin{align}
  \label{eqn:Bgt148hj}
  B
  &\ge\max_{0\le i\le 12N} \hhat_{E/\FF}(Q_i) \quad\text{since $Q_i\in\Sigma(B)$,} \notag\\
  &\ge \max_{0\le i\le N} \hhat_{E/\FF}(P_i)
  \quad\text{since $\{P_i\}_{0\le i\le N}\subset\{Q_i\}_{0\le i\le 12N}$} \notag\\
  &\ge \frac{1}{2(N+1)^2} \sum_{\substack{j,k=0\\j\ne k\\}}^N \hhat_{E/\FF}(P_j-P_k)
  \quad\text{from Lemma~\ref{lemma:hhatineq},} \notag\\
  &\ge \frac{1}{2} \left( \left( \frac{1}{2^{5}\cdot(1+M^{-1})\cdot D} \right)^{2/3}
  - \frac{h_\FF(j_E)}{12(N+1)}  \right)
  \quad\text{from \eqref{eqn:hPjPkN2N12}.}
\end{align}
\par
We let
\begin{equation}
  \label{eqn:BB148}
  B = B_{\d,M}
  := \frac{1-\d}{2} \left( \frac{1}{2^{5}\cdot(1+M^{-1})\cdot D} \right)^{2/3}.
\end{equation}
Then a bit of algebra with~\eqref{eqn:Bgt148hj} yields 
\begin{equation}
  \label{eqn:Nlt38hjd23}
  N + 1
  \le \frac{h_\FF(j_E)}{12\d} \left(\frac{2^5\cdot(1+M^{-1})\cdot D}{1}\right)^{2/3}.
\end{equation}
\par
To recapitulate, we have shown that for~$B_{\d,M}$ as given in~\eqref{eqn:BB148},
if
\[
\#\Sigma(B_{\d,M})\ge12N,
\]
for some~$N\ge{M}$, then~$N$ is bounded above
by~\eqref{eqn:Nlt38hjd23}. Using the specific values
in~\eqref{eqn:BB148} and~\eqref{eqn:Nlt38hjd23} yields
\begin{multline*}
  \#\Sigma\left(
  \frac{1-\d}{2} \left( \frac{1}{2^{5}\cdot(1+M^{-1})\cdot D} \right)^{2/3}
  \right) \\
  \le
  \max\left\{12M,\,
  \frac{h_\FF(j_E)}{\d} \left(2^5\cdot(1+M^{-1})\cdot D\right)^{2/3}\right\}.
\end{multline*}
Setting~$\e=M^{-1}$ completes the proof of Theorem~\ref{theorem-2}.
\end{proof}

\begin{proof}[Proof of Theorem~$\ref{theorem-1}$]
Continuing with the notation~\eqref{eqn:SigmaB} for~$\Sigma(B)$ as in
the proof of Theorem~\ref{theorem-2},
we write
\begin{align}
  \label{eqn:Cthm2122a}
  \Cl{thm2-1} &=\Cr{thm2-1}(\d,\e) :=
  \frac{1-\d}{2} \left( \frac{1}{2^{5}\cdot(1+\e)} \right)^{2/3}, \\
  \label{eqn:Cthm2122b}
  \Cl{thm2-2} &= \Cr{thm2-2}(\d,\e) :=
  \frac{1}{\d} \left(2^5\cdot(1+\e)\right)^{2/3},
\end{align} 
for the constants in Theorem~\ref{theorem-2} such that
\begin{equation}
  \label{eqn:thm2C1C2}
  \#\Sigma\left( \Cr{thm2-1} \cdot D^{-2/3} \right)
  \le \max\Bigl\{ 12\e^{-1},\,\Cr{thm2-2} \cdot h_\FF(j_E) \cdot D^{2/3} \Bigr\}.
\end{equation}
(We will choose~$\d$ and~$\e$ later.)
\par
\par
Let~$P\in{E(\KK)}$ be a non-torsion point,
and let~$\Cl{thm1-1}=\Cr{thm1-1}(\d,\e)$ denote a constant
whose value we will specify later. We suppose that
\begin{equation}
  \label{eqn:hEPleC1hf2D2}
  \hhat_{E/\FF}(P) \le \frac{\Cr{thm1-1}}{h_\FF(j_E)^2\cdot D^2}.
\end{equation}
Then for all~$m$ satisfying
\[
|m| \le (\Cr{thm2-1}/\Cr{thm1-1})^{1/2}\cdot h_\FF(j_E)\cdot D^{2/3},
\]
we have
\[
\hhat_{E/\FF}(mP) = m^2\hhat_{E/\FF}(P) \le \Cr{thm2-1} \cdot D^{-2/3}.
\]
Thus~\eqref{eqn:hEPleC1hf2D2} implies that
\begin{equation}
\label{eqn:setmPinSigma}
\bigl\{ mP : |m|\le  (\Cr{thm2-1}/\Cr{thm1-1})^{1/2} \cdot h_\FF(j_E)\cdot D^{2/3} \bigr\}
\subseteq \Sigma(\Cr{thm2-1}\cdot D^{-2/3}).
\end{equation}
\par
We compute
\begin{align*}
  2  (\Cr{thm2-1} /\Cr{thm1-1})^{1/2} &\cdot h_\FF(j_E)\cdot D^{2/3} - 1 \\
  &\le
  \#\Bigl\{ m\in\ZZ : |m|\le  (\Cr{thm2-1}/\Cr{thm1-1})^{1/2}
  \cdot h_\FF(j_E)\cdot D^{2/3} \Bigr\} \\*
  &\omit\hfill\text{using $2t-1\le2\lfloor{t}\rfloor+1$,} \\
  &\le
  \#\Bigl\{ mP : |m|\le  (\Cr{thm2-1}/\Cr{thm1-1})^{1/2}
  \cdot h_\FF(j_E)\cdot D^{2/3} \Bigr\} \\*
  &\omit\hfill\text{since $P$ is non-torsion, so the $mP$ are distinct,} \\
  &\le \#\Sigma(\Cr{thm2-1}\cdot D^{-2/3})
  \quad\text{from \eqref{eqn:setmPinSigma},}\\*
  &\le \max\Bigl\{ 12\e^{-1},\,\Cr{thm2-2} \cdot h_\FF(j_E)\cdot D^{2/3} \Bigr\}  
  \quad\text{from~\eqref{eqn:thm2C1C2}.}
\end{align*}
We thus obtain a contradiction to~\eqref{eqn:hEPleC1hf2D2}
if we choose~$\Cr{thm1-1}$ to satisfy
\[
\Cr{thm1-1} < \max\left\{
\frac{4\Cr{thm2-1}h_\FF(j_E)^2D^{4/3}}{(12\e^{-1}+1)^2},\;
\frac{4\Cr{thm2-1}}{\bigl(\Cr{thm2-2}+h_\FF(j_E)^{-1}\cdot D^{-2/3}\bigr)^2}
\right\}.
\]
Using the formulas~\eqref{eqn:Cthm2122a} and~\eqref{eqn:Cthm2122b}
for~$\Cr{thm2-1}$ and~$\Cr{thm2-2}$ and the fact that we may choose
any~$\d$ and~$\e$ between~$0$ and~$1$, we deduce (after some algebra)
that if we define
\begin{multline}
  \label{eqn:hEFPgemaxmin1}
  \Cl{maxmin}(\d,\e,J,D) =
  \left( \frac{1-\d}{2^{7/3}(1+\e)^{2/3}} \right) \\
  \cdot
  \min\left\{ \frac{ \e^2 J^2 D^{4/3}}{ (12+\e)^2 },\;
  \frac{\d^2}{ \bigl(2^{10/3} (1+\e)^{2/3}  + \d J^{-1} D^{-2/3}\bigr)^2 }
  \right\},
\end{multline}
then
\begin{equation}
  \label{eqn:hEFPgemaxmin2}
  \hhat_{E/\FF}(P) 
  \ge \max_{\substack{0<\d<1\\0<\e<1\\}} 
  \Cr{maxmin}\bigl(\d,\e,h_\FF(j_E),D\bigr)) \cdot\frac{1}{h_\FF(j_E)^2 D^2}.
\end{equation}
\par
We note that
\[
\Cr{maxmin}(\d,\e,J,D) \ge \Cr{maxmin}(\d,\e,1,1),
\]
so if we want a value of the constant in~\eqref{eqn:hEFPgemaxmin2}
that is valid for all~$E/\FF$ and all~$\KK/\FF$, we should
take~$(\d,\e)$ to maximize~$\Cr{maxmin}(\d,\e,1,1)$. There is
undoubtedly a clever way to maximize the
quantity~$\Cr{maxmin}(\d,\e,1,1)$ on the square~$[0,1]^2$,
but we simply did a brute-force search of a~$1000$-by-$1000$ point
grid in the square~$[0,1]^2$ to find the point~$(\d,\e)=(0.561,0.508)$
that yields the value
\[
\Cr{maxmin}(0.561,0.508,1,1) = 9.52455\cdot10^{-5} = \frac{1}{10499.2}.
\]
This gives a slightly better constant than quoted in the statement
of Theorem~\ref{theorem-1}.
\par
When~$h_\FF(j_E)$ or~$D$ is large, we can find an improved constant
by taking~$\e$ small and~$\d$ close to the value that maximizes~$(1-\d)\d^2$, i.e.,
taking~$\d$ close to~$\frac23$. For example, taking~$\e=\frac{1}{50}$ and~$\d=\frac23$
gives
\[
\Cr{maxmin}\left(\frac23,\frac{1}{50},J,D\right)
= \min\left\{ \frac{J^2D^{4/3}}{5.57\cdot10^6},\, \frac{1}{4107.37} \right\}.
\]
Hence
\[
J^2D^{4/3} \ge 1356.16
\quad\Longrightarrow\quad
\Cr{maxmin}\left(\frac23,\frac{1}{50},J,D\right)
=\frac{1}{4107.37}.
\]
This gives the second part of Theorem~\ref{theorem-1}.
\end{proof}

%%%%%%%%%%%%%%%%%%%%%%%%%%%%%%%%%%%%%%%%%%%%%%%%%%%%%%%%%%%%%%%%%%%%%%
\section{Lehmer's Conjecture for Isotrivial Elliptic Curves}
\label{section:finalremarks0}
%%%%%%%%%%%%%%%%%%%%%%%%%%%%%%%%%%%%%%%%%%%%%%%%%%%%%%%%%%%%%%%%%%%%%%
Conjecture~\ref{conjecture:lehmerecoverff} and
Theorems~\ref{theorem-1} and~\ref{theorem-2} all include the
assumption that~$E/\FF$ is not isotrivial, i.e., they assume
that~$j_E\notin{k}$. This is necessary, since the conjecture is false
in general if~$j_E\in{k}$.  To see why, suppose instead
that~$j_E\in{k}$. Then there is a finite extension~$\FF'/\FF$ and an
elliptic curve~$E'/k$ defined over the base field~$k$ such
that~$E/\FF'$ splits as a product
\[
E \times_{\Spec(\FF)} \Spec(\FF')\cong E' \times_{\Spec(k)} \Spec(\FF').
\]
Then for any extension~$\KK/\FF'$, the group~$E(\KK)$ contains a subgroup that
is isomorphic to~$E'(k)$, and all of the points in this subgroup have
canonical height~$0$. Thus
Conjecture~\ref{conjecture:lehmerecoverff} is not true
if~$j_E\in{k}$.
\par
We can rescue Conjecture~\ref{conjecture:lehmerecoverff} by
restricting to extensions~$\KK/\FF$ over which~$E/\KK$ does not split,
or in fancier language, extensions such that the~$\KK/k$-trace of~$E$
is trivial.

\begin{proposition}
\label{proposition:isotrivialhtgeD}
Suppose that~$E/\FF$ is isotrivial, and for~$\KK/\FF$,
let~$\Dcal_{E/\KK}$ be the minimal discriminant of~$E/\KK$.
\begin{parts}
\Part{(a)}
Every~$P\in{E(\KK)}$ satisfies either
\[
12P=0
\quad\text{or}\quad
\hhat_{E/\FF}(P) \ge \frac{\deg(\Dcal_{E/\KK})}{12^3[\KK:\FF]}.
\]
\Part{(b)}
If~$E$ does not split over~$\KK$,
i.e., if~$j_E\in{k}$ and~$\Trace_{\KK/k}(E)=0$,
then~$\deg(\Dcal_{E/\KK})\ge1$, so the lower bound in
Conjecture~\ref{conjecture:lehmerecoverff} is valid
in such extensions~$\KK/\FF$.
\end{parts}
\end{proposition}
\begin{proof}
(a)\enspace
The assumption that~$E$ is isotrivial implies
that~$E/\KK$ has only additive reduction.
Hence for~$P\in{E(\KK)}$ satisfying~$12P\ne0$, we have
\begin{align*}
  \hhat_{E/\FF}(P) &= \frac{1}{12^2} \hhat_{E/\FF}(12P) \\
  &= \frac{1}{12^2D} \sum_{w\in M_\KK} e(w)\lhat_w(12P)
  \quad\text{from Theorem~\ref{theorem:localhtexist}(b),}\\
  &\ge \frac{1}{12^2D} \sum_{w\in M_\KK} \frac{e(w)}{12}w(\Dcal_{E/\KK})
  \quad\text{from Lemma~\ref{lemma:potentialgoodreduction}(a),} \\
  &= \frac{\deg(\Dcal_{E/\KK})}{12^3D}.
\end{align*}
This proves the first part of Proposition~\ref{proposition:isotrivialhtgeD}.
For the second part we note that
\[
\Trace_{\KK/k}(E)=0
\quad\Longleftrightarrow\quad
\Dcal_{E/\KK}\ne0
\quad\Longleftrightarrow\quad
\deg(\Dcal_{E/\KK})\ge 1.
\qedhere
\]
\end{proof}

%%%%%%%%%%%%%%%%%%%%%%%%%%%%%%%%%%%%%%%%%%%%%%%%%%%%%%%%%%%%%%%%%%%%%%
\appendix
%%%%%%%%%%%%%%%%%%%%%%%%%%%%%%%%%%%%%%%%%%%%%%%%%%%%%%%%%%%%%%%%%%%%%%

%%%%%%%%%%%%%%%%%%%%%%%%%%%%%%%%%%%%%%%%%%%%%%%%%%%%%%%%%%%%%%%%%%%%%%
\section{Proof of an Inequality}
\label{section:inquality}
%%%%%%%%%%%%%%%%%%%%%%%%%%%%%%%%%%%%%%%%%%%%%%%%%%%%%%%%%%%%%%%%%%%%%%

In this section we prove Lemma~\ref{lemma:LS-Lemma9.4}, which we
recall says that for all positive real
numbers~$\a,\b,e_0,e_1,\ldots,e_r$, we have
\[
  e_0 = \max\{e_0,\ldots,e_r\}
  \quad\Longrightarrow\quad
  \a e_0 + \b \sum_{i=1}^r \frac{1}{e_i}
  \ge
  \biggl( \frac{27}{4}\a^2\b\sum_{i=0}^r e_i \biggr)^{1/3}.
\]

\begin{proof}[Proof of Lemma~$\ref{lemma:LS-Lemma9.4}$]
We denote the all-ones vector by
\[
\ONE_n = (1,1,\ldots,1) \in \ZZ^n.
\]
For~$\bfx=(x_0,\ldots,x_r)\in\RR_{>0}^{r+1}$ and~$\a,\b>0$ and~$\e\ge0$, we define
\[
f_\e(\a,\b,\bfx) = \frac{ \a x_0 + \b x_1^{-1} + \cdots + \b x_r^{-1} }{ (x_0+x_1+\cdots+x_r)^{1/3+\e} }.
\]
The goal is to find a lower bound for~$f_\e(\a,\b,\bfx)$ that depends only on~$\a$,~$\b$, and~$\e$, but
is independent of both~$r$ and~$\bfx$.
\par
We note that
\begin{align*}
f_\e(\a,\b,\sqrt{2\b r/\a} \cdot \ONE_{r+1})
&= \frac{ \sqrt{2\a\b r} + r\sqrt{\a\b /2 r} }{ \bigl((r+1)\sqrt{2\b r/\a} \bigr)^{1/3+\e} } \\
&= \frac{ (2^{1/2}+2^{-1/2})\sqrt{\a\b r} }
{ r^{1/2+3\e/2} \cdot (2\b/\a)^{1/6+\e/2} \cdot (1+r^{-1})^{1/3+\e}} \\
&\xrightarrow{\;\;r\to\infty\;\;}
\begin{cases}
  3(\a^2\b/4)^{1/3} &\text{if $\e=0$.} \\
  0 &\text{if $\e>0$.} \\
\end{cases}
\end{align*}
This shows that we cannot hope for a lower bound
in~\eqref{eqn:ae0sumboverei274} that is independent of~$r$ using an
exponent strictly larger than~$1/3$, and that for exponent~$1/3$, the
best possible lower bound is~$3(\a^2\b/4)^{1/3}$, where we have used the
simplification \text{$(2^{1/2}+2^{-1/2})\cdot2^{-1/6}=3\cdot2^{-2/3}$}.
\par
We henceforth let
\[
f(\bfx) := f_0(\a,\b,\bfx) := 
\frac{ \a x_0 + \b x_1^{-1} + \cdots + \b x_r^{-1} }{ (x_0+x_1+\cdots+x_r)^{1/3} }.
\]
To ease notation, we let
\[
A = A(\bfx) := \a x_0 + \b\sum_{i=1}^r\frac{1}{x_i}
\quad\text{and}\quad
D = D(\bfx) := \sum_{i=0}^r x_i,
\]
so
\[
f(\bfx) = \frac{A(\bfx)}{D(\bfx)^{1/3}}
\]
We want to minimize~$f(\bfx)$ on the region
\[
\Rcal := \Bigl\{ \bfx : \text{$0\le x_i \le x_0$ for all $0\le i\le r$}\Bigr\}.
\]
We note that as any coordinate~$x_j\to0$, we have~$f(\bfx)\to\infty$, so the minimum does
not occur along any of the coordinate hyperplanes. We first look for
critical points in the interior of the region~$\Rcal$. For any~$1\le{j}\le{r}$ we
have
\begin{equation}
\label{eqn:partialfxxj}
\frac{\partial f(\bfx)}{\partial x_j}
= \frac{D^{1/3} \b (-x_j^{-2}) - A \cdot\frac13\cdot D^{-2/3}}{D^{2/3}}
= -\frac{3 D \b x_j^{-2} + A}{3 D^{4/3}}.
\end{equation}
All of the quantities~$A$,~$D$,~$\b$, and~$x_j^2$ are strictly positive, so
$\partial f(\bfx)/\partial x_j$ doesn't vanish in the interior of~$\Rcal$.
Hence the minimum is attained on the boundary of~$\Rcal$.
\par
The boundary of~$\Rcal$ is the union of~$r$ regions,
one region for each \text{$1\le k\le r$}, having the form
\[
\Bigl\{ \bfx : \text{$0\le x_i \le x_0 = x_k$ for all $0\le i\le r$} \Bigr\}.
\]
By symmetry, it suffices to take~$k=1$. More generally, again by symmetry, it suffices to
look for the minimum of~$f(\bfx)$ in the interior of each of the regions
\[
\Rcal_s := \Bigl\{ \bfx :
\text{$0\le x_i \le x_0 = x_1 = \cdots = x_s$ for all $0\le i\le r$} \Bigr\}.
\]
To ease notation, we write points in~$\Rcal_s$ as
\[
(\bfx,\bfy) :=
(\underbrace{x,x,\ldots,x}_{\hidewidth\text{$s+1$ copies of $x$}\hidewidth},y_1,\ldots,y_t)
\quad\text{with $t=r+1-s$.}
\]
For~$t\ge1$, if we look for critical points in the interior
of~$\Rcal_s$, we find that\footnote{We note that if~$t=r$, then this
is the same calculation~\eqref{eqn:partialfxxj} that we did earlier.}
\[
\frac{\partial f(\bfx)}{\partial y_1}
= \frac{D^{1/3} \b (-y_1^{-2}) - A \cdot\frac13\cdot D^{-2/3}}{D^{2/3}}
= -\frac{3 D \b y_1^{-2} + A}{3 D^{4/3}}.
\]
This shows there are no critical points in the interior of~$\Rcal_s$, which proves
that the minimum value of~$f(\bfx)$ is attained in the region~$\Rcal_{r+1}$ with~$t=0$.
In other words, the minimum is attained at a point of the form
\[
x\ONE_{r+1} = (x,x,x,\ldots,x).
\]
\par
We are thus reduced to studying
\[
f(x\ONE_{r+1}) = \frac{\a x + \b r/x}{  (r+1)^{1/3}x^{1/3} }.
\]
A short calculation yields
\begin{align*}
\frac{d f(x\ONE_{r+1})}{dx}
&= \frac{2(\a x^2 - 2\b r)}{3(r+1)^{1/3}x^{7/3}} .
\end{align*}
There is thus a unique critical point at~$x=\sqrt{2\b{r}/\a}$, which is necessarily
a minimum since~$f(x\ONE_{r+1})\to\infty$ as~$x\to0^+$ and as~$x\to\infty$.
Hence
\begin{align*}
\inf_{\bfx\in\Rcal} f(\bfx)
&= \inf_{x\in\Rcal_{r+1}} f(x\ONE_{r+1})\\
&= f\bigl(\sqrt{2\b r/\a}\,\ONE_{r+1}\bigr)\\
&= \frac{\a (2\b r/\a)^{1/2} + \b r  (2\b r/\a)^{-1/2}}{(r+1)^{1/3}\cdot  (2\b r/\a)^{1/6}}\\
&=(2^{1/2}+2^{-1/2})\cdot\frac{1}{2^{1/6}}\cdot\left(\frac{\a^2\b}{1+r^{-1}}\right)^{1/3}.
\end{align*}
We now let~$r\to\infty$ to obtain a lower bound that holds for
all~$r$, and we rewrite the constant
using~\text{$(2^{1/2}+2^{-1/2})\cdot2^{-1/6}=(27/4)^{1/3}$} to obtain the desired
result.
\end{proof}

%%%%%%%%%%%%%%%%%%%%%%%%%%%%%%%%%%%%%%%%%%%%%%%%%%%%%%%%%%%%%%%%%%%%%%
\section{Local Height Formulas via Weierstrass Equations}
\label{section:localhtImstar}
%%%%%%%%%%%%%%%%%%%%%%%%%%%%%%%%%%%%%%%%%%%%%%%%%%%%%%%%%%%%%%%%%%%%%%

In this section we derive formulas for the local height in the case
of~$I_M^*$ reduction. We remark that these formulas appear already
in~\cite[Table~1]{JHSThesis}. They are also implicit
in~\cite[Table~1.19]{CoxZucker} in the case that~$\FF$ is the function
field of a curve over~$\CC$. But for completeness, we give here a
proof using explicit Weierstrass equations that is valid in all
residue characteristics other than~$2$ and~$3$, and in
Section~\ref{section:localhtImstarintersection} we give an alternative
proof using intersection theory on the associated elliptic surface.
\par
Let~$(\FF_v,v)$ be a complete local field with  valuation normalized so that
\text{$v:\FF_v^*\onto\ZZ$}, and let~$E/\FF_v$ be an elliptic curve with
additive, potential multiplicative, reduction. Thus in the
Kodaira--N{\'e}ron
classification~(\cite{Antwerp4Tate},~\cite[Table~15.1]{AEC}), the
elliptic curve~$E$ has reduction type~$I_M^*$ for some~$M\ge1$.  The
special fiber of a complete minimal model of~$E$ at~$v$ is illustrated
in Figure~\ref{figure:IMstar}.  The fiber consists of four components
of multiplicity~$1$ that we have
labeled~$\boldsymbol0$,~$\boldsymbol\a$,~$\boldsymbol\b$,
and~$\boldsymbol{\b'}$, together with~\text{$m+1$} components of
multiplicity~$2$. Points in~$E_0(\FF_v)$ are the points that intersect
the~$\boldsymbol0$-component. More generally, we let
\[
E_\nu(\FF_v) := \bigl\{P\in{E}(\FF_v):v\bigl(x(P)^{-1}\bigr)\ge2\nu\bigr\}
\]
denote the points in the $\nu$th level of the formal group filtration,
where~$x$ is a coordinate on a minimal Weierstrass equation for~$E$
at~$v$.  The group of components~$E(\FF_v)/E_0(\FF_v)$ is
either~$(\ZZ/2\ZZ)^2$ of~$(\ZZ/4\ZZ)$ depending on whether~$M$ is even
or odd, with the $\boldsymbol\b$ and $\boldsymbol{\b'}$-components
being the elements of order~$4$ in the latter case.  We also note that
the minimal discriminant of~$E$ at~$v$ satisfies
\begin{equation}
  \label{eqn:IMstarstuff2}
  M = v(\Dcal_{E/KK}) - 6 = v(j_E^{-1}).
\end{equation}

\begin{proposition}
\label{proposition:localhtImstar}
With notation as above for an elliptic curve with
Type~$I_M^*$ reduction, the canonical local height
\[
\lhat_v : E(\FF_v)\setminus0 \longrightarrow [0,\infty)
\]
is given by  
\[
\lhat_v(P) = \begin{cases}
  \phantom{-}\dfrac{v(j_E^{-1})}{12}+\nu
  &\text{if $P\in E_\nu(\FF_v)\setminus E_{\nu+1}(\FF_v)$,} \\[2\jot]
  \phantom{-}\dfrac{v(j_E^{-1})}{12}= \frac12\Bernoulli_2(0)v(j_E^{-1})
  &\text{if $P$ hits the $\boldsymbol\a$-component,} \\[2\jot]
  -\dfrac{v(j_E^{-1})}{24}= \frac12\Bernoulli_2(\frac12)v(j_E^{-1})
  &\text{if $P$ hits the $\boldsymbol\b$ or $\boldsymbol{\b'}$-component.} \\
\end{cases}
\]
\end{proposition}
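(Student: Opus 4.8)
The plan is to reduce everything to an explicit minimal Weierstrass equation for an elliptic curve of type $I_M^*$ and then apply the defining properties of $\lhat_v$ from Theorem~\ref{theorem:localhtexist}. Since $E/\FF_v$ has additive, potential multiplicative reduction with $v(j_E^{-1})=M\ge1$, I would first write down a convenient model. Over the unramified quadratic twist, $E$ becomes a Tate curve $E_q$ with $v(q)=M$; concretely I can take an equation of the form $y^2 = x^3 + a_2 x^2 + a_4 x + a_6$ obtained by twisting a minimal multiplicative-reduction model by a uniformizer $\pi$, so that $\Delta$ gets multiplied by $\pi^6$, giving $v(\Dcal_{E/\FF_v}) = M+6$, consistent with~\eqref{eqn:IMstarstuff2}. (Here I use $\characteristic(k)\ne2,3$ to put everything in short Weierstrass form and to make the twist separable.) Running Tate's algorithm on this model identifies the four multiplicity-one components $\bfzero,\bfalpha,\bfbeta,\bfbeta'$ and pins down which $E(\FF_v)$-points hit which component in terms of valuations of coordinates.

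Next I would pass to the quadratic extension $\LL_u/\FF_v$ over which $E$ acquires multiplicative reduction; there $u(q_{E})=2M$ (the ramification doubles the valuation of the Tate parameter), and Tate's explicit formula for the local height in the multiplicative case \cite[Theorem~VI.4.2]{ATAEC} gives $\lhat_u(P) = \tfrac12 \Bernoulli_2(\f_u(P))\, u(q_E) + (\text{an Archimedean-free correction involving } u(x(P)))$ for the appropriate component homomorphism $\f_u$. By invariance of the local height under field extension (Theorem~\ref{theorem:localhtexist}(c)), $\lhat_v(P) = \lhat_u(P)$ for $P\in E(\FF_v)$, so it remains to track how the component of $P$ on the minimal $\FF_v$-model maps into $\LL_u$ and hence what value $\Bernoulli_2(\f_u(P))$ takes. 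The component $\bfzero$ of $E(\FF_v)$ lies in $E_0(\LL_u)$, so $\f_u=0$ there and the Tate formula collapses to the formal-group expression $\tfrac{1}{12}v(j_E^{-1}) + \nu$ on $E_\nu\setminus E_{\nu+1}$ (matching $\Bernoulli_2(0)=\tfrac16$); the $\bfalpha$-component maps to the $2$-torsion class, again giving $\Bernoulli_2(0)$ hence $\tfrac{1}{12}v(j_E^{-1})$ after accounting for the factor $2$ relating $u(q_E)=2M$ to $v(j_E^{-1})=M$; and the $\bfbeta,\bfbeta'$-components map to classes with $\f_u = \pm\tfrac14$, whence $\Bernoulli_2(\tfrac14) = -\tfrac{1}{24}\cdot? $ — more precisely the combinatorics give value $\tfrac12\Bernoulli_2(\tfrac12)v(j_E^{-1}) = -\tfrac{1}{24}v(j_E^{-1})$ as stated. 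I would double-check these normalizations using property (iii) (the duplication formula) and property (ii) (the behaviour near $O$), which independently force the constant on the $\bfzero$-component to be $\tfrac{1}{12}v(\Dcal) - \tfrac12(\text{something})$ and after simplification $\tfrac{1}{12}v(j_E^{-1})+\nu$.

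The main obstacle I anticipate is bookkeeping the factor of $2$: the Tate parameter over $\LL_u$ has valuation $2M$, the ramification index is $e(u/v)=2$, and the component homomorphism $\f_u$ takes values in $\tfrac{1}{2M}\ZZ/\ZZ$, so one must be careful that $\tfrac12\Bernoulli_2(\f_u(P))u(q_E)$ really reduces to an expression in $v(j_E^{-1})=M$ with the claimed coefficients $\tfrac{1}{12}$ and $-\tfrac{1}{24}$, and that the values $\f_u(P)\in\{0,\tfrac12,\tfrac14\}$ are correctly attached to the four components. A secondary subtlety is verifying, via Tate's algorithm on the explicit twisted model, the precise identification of which rational points reduce onto $\bfbeta$ versus $\bfbeta'$ when $M$ is odd (order-$4$ elements of the component group) versus the $(\ZZ/2\ZZ)^2$ case when $M$ is even — but since $\Bernoulli_2(\tfrac14)$ is not what appears (the symmetry $\f_w\mapsto-\f_w$ and the fact that $\bfbeta,\bfbeta'$ lie "halfway" forces the value $\Bernoulli_2(\tfrac12)$), this distinction does not actually affect the height formula, only the description of the homomorphism, and I would remark on that explicitly.
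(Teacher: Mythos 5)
Your strategy---pass to the quadratic extension $\LL_u/\FF_v$ over which $E$ acquires multiplicative reduction, apply Tate's explicit formula there, and descend via Theorem~\ref{theorem:localhtexist}(c)---is a legitimate route and genuinely different from the paper's, which never leaves $\FF_v$: the paper builds the explicit minimal model $y^2=x^3-3\pi Ax^2-2\pi^3D$, identifies which coordinate valuations correspond to the components $\bfzero,\bfalpha,\bfbeta,\bfbeta'$ via Tate's algorithm, and computes $\lhat_v$ on the non-identity components from the duplication property (iii) together with the formula $\lhat_v=\frac12v\bigl(x^{-1}\bigr)+\frac1{12}v(\Dcal_E)$ on the $\bfzero$-component. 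However, your sketch breaks down at the one step your route cannot avoid: determining the image of each component in the component group $\ZZ/2M\ZZ$ of the $I_{2M}$ fibre over $\LL_u$. First, the quadratic extension is necessarily \emph{ramified}: $k$ is algebraically closed, so $\FF_v$ has no nontrivial unramified extension, and one must twist by a uniformizer (this is why $\ord_u(q)=2M$). More seriously, the claim that $\bfbeta,\bfbeta'$ map to classes with $\f_u=\pm\frac14$ is false, and the proposed rescue by the symmetry $\f\mapsto-\f$ is not an argument, since $\Bernoulli_2(\frac14)=-\frac1{48}\ne\Bernoulli_2(\frac12)=-\frac1{12}$; with $\f_u=\pm\frac14$ your formula would produce $-\frac1{48}$-type values, not $-\frac1{24}v(j_E^{-1})$. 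What is actually needed is the descent computation you omit: for $P\in E(\FF_v)$ the Tate parameter $t$ satisfies $\s(t)\,t\in q^{\ZZ}$, and $\ord_u\circ\s=\ord_u$ forces $\ord_u(t)\equiv0$ or $M\pmod{2M}$, so only $\f_u\in\{0,\frac12\}$ occurs; the $\bfbeta,\bfbeta'$-points give $\f_u=\frac12$ (hence $-\frac1{24}v(j_E^{-1})$ after the $e(u/v)=2$ renormalization you flagged but did not carry out), while $\bfzero,\bfalpha$ land in the identity component over $\LL_u$, the $\bfalpha$-points reducing to $-1\in k^*$ so that the auxiliary terms in Tate's formula vanish and the value $\frac1{12}v(j_E^{-1})$ comes out. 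Your phrase ``maps to the $2$-torsion class'' for $\bfalpha$, if read inside $\ZZ/2M\ZZ$, would give the wrong value $\Bernoulli_2(\frac12)$.

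The $\bfzero$-component case is likewise asserted rather than computed: the untwisting isomorphism rescales the formal parameter by $\pi^{1/2}$, so for $P\in E_\nu(\FF_v)\setminus E_{\nu+1}(\FF_v)$ the correction term in Tate's formula is $\nu+\frac12$ in the $v$-normalization, and the honest output of your method is $\lhat_v(P)=\frac1{12}v(\Dcal_E)+\nu=\frac1{12}v(j_E^{-1})+\frac12+\nu$. This agrees with the identity-component formula the paper itself uses in its proof, but not with the first displayed case of the Proposition as printed, so you should work this case out explicitly rather than declaring that the formula ``collapses'' to the stated expression. In short, the framework is sound and would yield an independent proof, but as written the component-image computation---the heart of this approach---is incorrect or missing, so the proposal has a genuine gap.
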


\begin{figure}[t]
  \begin{picture}(300,100)
    \setlength{\unitlength}{0.5pt}
    \thicklines
    \put(20,30){\line(0,1){100}}  \put(14,135){\makebox(0,0)[b]{$\boldsymbol0$}}
    \put(50,30){\line(0,1){100}}  \put(56,135){\makebox(0,0)[b]{$\boldsymbol\a$}}
    \put(0,100){\line(1,-1){100}}
    \put(75,0){\line(1,1){100}}
    \put(125,100){\line(1,-1){100}}
    \multiput(225,20)(20,0){5}{\circle*{3}}
    \put(500,30){\line(0,1){100}}  \put(494,135){\makebox(0,0)[b]{$\boldsymbol\b$}}
    \put(530,30){\line(0,1){100}}  \put(536,135){\makebox(0,0)[b]{$\boldsymbol{\b'}$}}
    \put(550,100){\line(-1,-1){100}}
    \put(475,0){\line(-1,1){100}}
    \put(425,100){\line(-1,-1){100}}
    \put(70,38){\makebox(0,0)[b]{$\bfgamma_0$}}
    \put(133,40){\makebox(0,0)[t]{$\bfgamma_1$}}
    \put(203,50){\makebox(0,0)[t]{$\bfgamma_2$}}
    \put(350,50){\makebox(0,0)[tr]{$\bfgamma_{M-2}$}}
    \put(445,35){\makebox(0,0)[tr]{$\bfgamma_{M-1}$}}
    \put(485,40){\makebox(0,0)[br]{$\bfgamma_{M}$}}
  \end{picture}
  \caption{The special fiber for $I_M^*$ reduction. It consists of~$4$
    fibers of multiplicity~$1$,
    labeled~$\boldsymbol0$,~$\boldsymbol\a$,~$\boldsymbol\b$
    and~$\boldsymbol{\b'}$, and~$M+1$ components of multiplicity~$2$
    labeled $\bfgamma_0,\ldots,\bfgamma_M$.}
  \label{figure:IMstar}
\end{figure}

\begin{proof}
By a minor abuse of notation, we let
\begin{align*}
  E^{(\bfzero)}(\FF_v)
  &:= \bigl\{ P\in E(\FF_v) : \text{$P$ hits the $\bfzero$-component} \bigr\}, \\
  E^{(\bfalpha)}(\FF_v)
  &:= \bigl\{ P\in E(\FF_v) : \text{$P$ hits the $\bfalpha$-component} \bigr\}, \\
  E^{(\bfbeta)}(\FF_v)
  &:= \bigl\{ P\in E(\FF_v) : \text{$P$ hits the $\bfbeta$ or $\bfbeta'$-component} \bigr\}.
\end{align*}
\par
Let~$\pi$ be a uniformizer for~$v$.
The assumption that~$\FF_v$ does not have characteristic~$2$ or~$3$
means that we can start with a Weierstrass
equation in short normal form. Applying Tate's algorithm
(\cite{Antwerp4Tate},~\cite[IV~\S9]{ATAEC}) along the branch that leads
to Type~$I_M^*$ shows that
\[
E:y^2=x^3+a_4 x+a_6
\]
has Type~$I_M^*$ reduction if and only if
\[
v(a_4)=2,\quad v(a_6)=3,\quad
v(\D) = v\bigl(-2^4\cdot(4a_4^3+27a_6^2)\bigr) = 6+M.
\]
We let~$a_4=-3\pi^2a$ and~$a_6=2\pi^3b$, giving the equation
\[
E:y^2=x^3-3\pi^2ax+2\pi^3b,
\]
with
\[
v(a)=v(b)=0,
\quad
v(\D)=v\bigl(12^3\cdot\pi^6\cdot(a^3-b^2)\bigr)=6+M.
\]
Since~$\FF_v$ is complete with algebraically closed residue field,
Hensel's lemma tells us that~$\a\in\FF_v^*$ is a square if and only
if~$v(\a)$ is even. In particular, we can find an~$A\in\FF_v^*$
satisfying\footnote{The equality $v(A^3-b)=M$ serves to fix the sign
of~$A$, since we know that~$v(A^6-b^2)=M$, and only one of~$A^3\pm{b}$
has positive valuation.}
\[
A^2=a\quad\text{and}\quad v(A^3-b)=M.
\]
We change variables
\[
x\to{x+\pi{A}}
\quad\text{and for convenience we set}\quad
D=A^3-b.
\]
This yields the minimal Weierstrass equation
\[
E : y^2 = x^3 - 3\pi A x^2 - 2\pi^3 D
\]
with
\begin{align*}
v(A)&=0, \quad v(D)=M=v(j_E^{-1}),\\
\Dcal_E &= 12^3 \pi^6 D (2A^3-D), \\
v(\Dcal_E) &= 6+v(D) = 6+M.
\end{align*}
For a Weierstrass equation of this form, the duplication formula~\cite[III.2.3(d)]{AEC}
takes the form
\[
x(2P) = \frac{x^4 + 16 \pi^3 D x + 24 \pi^4 D A}{4y^2}.
\]
\par
Let~$P_0=(x_0,y_0)\in{E(\FF_v)}$. Tracing the points through Tate's
algorithm, we find that:
\begin{align*}
  P_0 \in E^{(\bfzero)}(\FF_v)
  &\;\Longleftrightarrow\;
  v\bigl(x(P_0)\bigr) \le 0, \\
  %%%
  P_0 \in E^{(\bfalpha)}(\FF_v)
  &\;\Longleftrightarrow\;
  v\bigl(x(P_0)\bigr) = 1, \\
  %%%
  \text{$P_0 \in E^{(\bfbeta)}(\FF_v)$ and $M$ odd}
  &\;\Longleftrightarrow\;
  \text{$v\bigl(x(P_0)\bigr) \ge \frac{M+3}{2}$ and $v\bigl(y(P_0)\bigr) = \frac{M+3}{2}$,} \\
  %%%
  \text{$P_0 \in E^{(\bfbeta)}(\FF_v)$ and $M$ even}
  &\;\Longleftrightarrow\;
  \text{$v\bigl(x(P_0)\bigr) = \frac{M+2}{2}$ and $v\bigl(y(P_0)\bigr) \ge \frac{M+4}{2}$.} 
\end{align*}
\par
\Case{$\bfalpha$}{$P_0\in{E^{(\bfalpha)}(\FF_v)}$}
In this case we always have~$2P_0\in{E^{(\bfzero)}}$.
The three monomials in the
numerator of~$x(2P_0)$ have valuations
\[
v(x_0^4)=4,\quad
v(16\pi^3 D x_0)=4+M,\quad
v(24\pi^4 D A) = 4+M,
\]
so the valuation of the numerator of~$x(2P_0)$ is~$4$. Since~$2P_0\in{E^{(\bfzero)}}(\FF_v)$,
its local height is given by
\begin{align}
  \label{eqn:lv2P0IMstar}
  \lhat_v(2P_0)
  &= \frac12 v\bigl(x(2P_0)^{-1}\bigr) + \frac{1}{12}v(\Dcal_E) \notag\\*
  &= \frac12v(y_0^2) - \frac12\cdot 4 + \frac{1}{12}v(\Dcal_E) \notag\\*
  &= v(y_0) - 2 + \frac{1}{12}v(\Dcal_E).
\end{align}
We use this to compute
\begin{align*}
\lhat_v(P_0)
&= \frac14\Bigl( \lhat_v(2P_0) - v(2y_0) + \frac14v(\Dcal_E) \Bigr)
\quad\text{duplication formula,} \\
&= \frac14 \left(
\left\{
v(y_0) - 2 + \frac{1}{12}v(\Dcal_E)
\right\} - v(2y_0) + \frac14v(\Dcal_E) \right)
\quad\text{from \eqref{eqn:lv2P0IMstar},} \\
&= \frac{1}{12}v(\Dcal_E) - \frac12 \quad\text{note that $v(2)=0$,}\\
&= \frac{v(j_E^{-1})}{12}\quad\text{since $v(\Dcal_E)=M+6=v(j_E^{-1})+6$.}
\end{align*}
\par
\Case{$\bfbeta$-even}{$P_0\in{E^{(\bfbeta)}(\FF_v)}$ and $M$ even}
This case is characterized by
\[
v(x_0)=\frac{M+2}{2}\quad\text{and}\quad v(y_0)\ge\frac{M+4}{2}.
\]
Hence the three monomials in the
numerator of~$x(2P_0)$ have valuations
\[
v(x_0^4)=2M+6,\quad
v(16\pi^3 D x_0)
=\frac32M+4,\quad
v(24\pi^4 D A) = 4+M.
\]
Since
\[
4+M < \frac32M+4 < 2M+6 \quad\text{for all $M\ge1$,}
\]
we find that
\[
v\bigl(x(2P_0)\bigr) = 4+M-2v(y_0)
= v(\Dcal_E)-2-2v(y_0). 
\]
Further, since~$E(\FF_v)/E_0(\FF_v)=(\ZZ/2\ZZ)^2$,
we  again have~$2P_0\in{E^{(\bfzero)}}(\FF_v)$,
so we can compute its local height as
\begin{align}
  \label{eqn:lv2P0IMstarbetaeven}
  \lhat_v(2P_0)
  &= \frac12 v\bigl(x(2P_0)^{-1}\bigr) + \frac{1}{12}v(\Dcal_E) \notag\\*
  &= \frac12\Bigl(-v(\Dcal_E)+2+2v(y_0)\Bigr) + \frac{1}{12}v(\Dcal_E) \notag\\*
  &= v(y_0) + 1 - \frac{5}{12}v(\Dcal_E).
\end{align}
We use this to compute
\begin{align*}
\lhat_v(P_0)
&= \frac14\Bigl( \lhat_v(2P_0) - v(2y_0) + \frac14v(\Dcal_E) \Bigr)
\quad\text{duplication formula,} \\
&= \frac14 \left(
\left\{
v(y_0) + 1 - \frac{5}{12}v(\Dcal_E)
\right\} - v(2y_0) + \frac14v(\Dcal_E) \right)
\quad\text{from \eqref{eqn:lv2P0IMstarbetaeven},} \\
&=  -\frac{1}{24}v(\Dcal_E) + \frac14 \quad\text{note that $v(2)=0$,}\\
&= -\frac{v(j_E^{-1})}{24}\quad\text{since $v(\Dcal_E)=M+6=v(j_E^{-1})+6$.}
\end{align*}
\par
\Case{$\bfbeta$-odd}{$P_0\in{E^{(\bfbeta)}(\FF_v)}$ and $M$ odd} 
This case is characterized by
\[
v(x_0)\ge\frac{M+3}{2}\quad\text{and}\quad v(y_0)=\frac{M+3}{2}.
\]
Further, we have~$E(\FF_v)/E_0(\FF_v)\cong\ZZ/4\ZZ$, with the~$\bfbeta$ components
being the elements of order~$4$, and thus~$2P_0\in{E^{(\bfalpha)}}$.
It follows from a case that we already did that
\[
\lhat(2P_0) = \frac{v(j_E^{-1})}{12}.
\]
Hence
\begin{align}
  \label{eqn:lv2P0IMstarbetaodd}
  \lhat_v(P_0)
  &= \frac14\Bigl( \lhat_v(2P_0) - v(2y_0) + \frac14v(\Dcal_E) \Bigr)
  \quad\text{duplication formula,} \notag\\
  &= \frac14\left( \frac{v(j_E^{-1})}{12} - \frac{M+3}{2} + \frac14v(\Dcal_E) \right) \notag\\
  &= \frac14\left( \frac{v(j_E^{-1})}{12} - \frac{v(j_E^{-1})+3}{2} + \frac14\Bigl(
  6+v(j_E^{-1})\Bigr)\right) \notag\\
  &= -\frac{1}{24}v(j_E^{-1}).
  \qedhere
\end{align}
\end{proof}

%%%%%%%%%%%%%%%%%%%%%%%%%%%%%%%%%%%%%%%%%%%%%%%%%%%%%%%%%%%%%%%%%%%%%%
\section{Local Height Formulas via Intersection Theory}
\label{section:localhtImstarintersection}
%%%%%%%%%%%%%%%%%%%%%%%%%%%%%%%%%%%%%%%%%%%%%%%%%%%%%%%%%%%%%%%%%%%%%%
An alternative approach to canonical heights in the function field
setting is via intersection theory, an idea appearing already
in~\cite{Manin}.  Writing a global intersection as a sum of local
intersections gives another way to derive local height formulas, as is
done in~\cite{CoxZucker} for elliptic surfaces over~$\CC$.  We 
sketch the proof and rederive the local height formula for~$I_M^*$ reduction,
omitting some details.
\par
Let~$\pi:\Ecal\to{C}$ be a smooth minimal elliptic fibration over a
smooth projective curve, everything defined over an algebraically
closed field~$k$. For this section we let~$\FF=k(C)$ be the global
function field, and for~$v\in{M_\FF}$, i.e., for points of~$C$, we
denote the completion of~$\FF$ at~$v$ by~$\FF_v$.
Let~$\Kcal_\Ecal\in\Pic(\Ecal)$ be the canonical bundle, and
let~$\Dcal_\Ecal\in\Div(C)$ be the minimal discriminant of~$\Ecal/C$.
Fixing a Weierstrass equation~$y^2=f(x)$ and using the
section~$(dx)^6/\D$ to the~$12$th-tensor-power
$\Omega_\Ecal^{\otimes12}$ of the sheaf of relative
differentials~$\Omega_\Ecal$ on~$\Ecal$, a local computation yields
\begin{equation}
  \label{eqn:OEpiDEKEC12}
  \Ocal_\Ecal(\pi^*\Dcal_\Ecal) \cong \Kcal_{\Ecal/C}^{\otimes12},
\end{equation}
where $\Kcal_{\Ecal/C}:=\Kcal_\Ecal\otimes\pi^*\Kcal_C^{-1}$
is the relative canonical bundle for the fibration~$\pi:\Ecal\to{C}$.
\par
Let~$O:C\to\Ecal$ be the zero-section, and let~$P:C\to\Ecal$ be an
arbitrary section. We identify~$P$ and~$O$ with their images
in~$\Ecal$, so in particular~$(P)$ and~$(O)$ are divisors on~$\Ecal$.
The Hodge index theorem, i.e., the fact that the intersection pairing
is negative definite except for one positive direction, allows one to
find a fibral divisor~$\F_P\in\Div(\Ecal)$ so that the
divisor\footnote{The divisor~$\F_P$ is uniquely determined up to
addition of a divisor of the form~$\pi^*\G$ for
some~$\G\in\Div(C)$. We can pin down a particular~$\F_P$ by requiring
that~$i_v\bigl(\F_P,(O)\bigr)=0$ for all~$v$, i.e., the support~$\F_P$
does not contain any fibral components that intersect the zero
section; but any choice of~$\F_P$ will yield the same formulas
for~$\hhat_E$ and~$\lhat_v$.}
\[
D_P:=(P) - (O) + \F_P
\]
satisfies
\[
D_P \cdot F = 0\quad\text{for all fibral divisors $F\in\Div(\Ecal)$.}
\]
Then one can show that
\begin{equation}
  \label{eqn:hecalP}
  \hhat_\Ecal(P) = -\frac12 D_P^2.
\end{equation}
\par
We observe that the translation-by-$P$ automorphism~$\t_P:\Ecal\to\Ecal$
satisfies~$\t_P^*(P)=(O)$, so~$(P)^2=(O)^2$. We compute
\begin{align}
  \label{eqn:Osqr2112}
  (O)^2
  &= -(O)\cdot\Kcal_\Ecal + 2g(C)-2 \quad\text{the adjunction formula,} \notag\\
  &= -(O)\cdot\Kcal_\Ecal + \deg\Kcal_C \notag\\
  &= -(O)\cdot\Kcal_\Ecal + (O)\cdot\pi^*\Kcal_C \notag\\
  &= -(O)\cdot \left( \Kcal_\Ecal\otimes \pi^*\Kcal_C^{-1} \right) \notag\\
  &= -(O)\cdot \Kcal_{\Ecal/C} \notag\\
  &= -\frac{1}{12} (O)\cdot\pi^*\Dcal_\Ecal \quad\text{from \eqref{eqn:OEpiDEKEC12},}\notag\\
  &= -\frac{1}{12}\deg\Dcal_\Ecal.  
\end{align}
Putting this together, we find that
\begin{align*}
\hhat_\Ecal(P)
&= -\frac12 D_P^2 \quad\text{from \eqref{eqn:hecalP},} \\
&= -\frac12 D_P \cdot \bigl( (P) - (O) + \F_P \bigr)
\quad\text{by definition of $D_P$,}\\
&= -\frac12 D_P \cdot \bigl( (P) - (O) \bigr)
\quad\TABT{since $D_P\cdot F=0$ for fibral $F$,\\ and $\F_P$ is fibral,\\} \\
&= -\frac12 \bigl( (P) - (O) + \F_P \bigr) \cdot \bigl( (P) - (O) \bigr)
\quad\text{by definition of $D_P$,}\\
&= (P)\cdot(O) -\frac12(P)^2 -\frac12(O)^2 - \frac12\F_P\cdot\bigl( (P) - (O) \bigr) \\
&= (P)\cdot(O) + \frac{1}{12}\deg\Dcal_\Ecal - \frac12 \F_P\cdot\bigl( (P) - (O) \bigr) 
\quad\text{from \eqref{eqn:Osqr2112}.} 
\end{align*}
For~$P\ne{O}$, this allows a local decomposition by setting
\begin{equation}
  \label{eqn:lhatvPivPOivFPPO}
  \lhat_v(P) := i_v\bigl( (P),(O) \bigr) + \frac{1}{12}v(\Dcal_\Ecal)
  - \frac12 i_v\bigl( \F_P, (P)-(O) \bigr).
\end{equation}
\par
We note that if~$P\in E_0(\FF_v)$, i.e., if~$P$ goes through
the identity component of the fiber over~$v$, then the fact that~$\F_P$ is
fibral implies that the last term vanishes. Further, working
on a minimal Weierstrass equation, the first term 
\[
i_v\bigl( (P),(O) \bigr)
= \frac12\max\bigl\{ v\bigl(x(P)^{-1}\bigr),\,0\bigr\}
\]
is the location of~$P$ in the formal group filtration. Hence
\[
P\in E_0(\FF_v)
\;\Longrightarrow\;
\lhat_v(P) = \nu  + \frac{1}{12}v(\Dcal_\Ecal)
\quad\text{for $P\in E_\nu(\FF_v)\setminus E_{\nu+1}(\FF_v)$.} 
\]
\par
Finally, for~$P\notin{E_0(\FF_v)}$, the local height~$\lhat_v(P)$
depends only on the fibral component hit by~$P$ and can be determined
by computing the divisor~$\F_P$.  To do this, we start by computing
the intersection matrix~$A$ for the components of the special
fiber. Figure~\ref{figure:IMstarintersectionmatrix} gives the matrix~$A(I_M^*)$
for the~$I_M^*$ configuration illustrated in
Figure~\ref{figure:IMstar}.

\begin{figure}[t]
\[
\bordermatrix{
  & 0 & \bfalpha & \bfgamma_0 & \bfgamma_1 & \cdots & \cdots
  & \bfgamma_{M-1} & \bfgamma_M & \bfbeta & \bfbeta' \cr
  0          & -2 & 0 & 1  \cr
  \bfalpha   & 0 & -2 & 1  \cr
  \bfgamma_0 & 1 & 1 & -2 & 1  \cr
  \bfgamma_1 & & & 1 & -2 & 1 \cr
  \vdots     & & & & \ddots & \ddots & \ddots\cr
  \vdots     & & & & & \ddots & \ddots & \ddots\cr
  \bfgamma_{M-1} & & & & & & 1 & -2 & 1 \cr
  \bfgamma_M & & & & & & & 1 & -2 & 1 & 1\cr
  \bfbeta    & & & & & & & & 1 & -2 & 1\cr
  \bfbeta'   & & & & & & & & 1 & 0 & -2\cr
  }
\]
\caption{The intersection matrix  $A(I_M^*)$ for the $I_M^*$ components
  illustrated in Figure~\ref{figure:IMstar}.}
\label{figure:IMstarintersectionmatrix}
\end{figure}

The null space of~$A(I_M^*)$ is spanned by
the total fiber (with multiplicities)
\[
\bfzero + \bfalpha + 2\bfgamma_0 + 2\bfgamma_1 + \cdots + 2\bfgamma_M + \bfbeta + \bfbeta'.
\]
This can be computed directly, or from the fact that all total
fibers are algebraically (and hence numerically) equivalent.
We define a reduced matrix
\[
A(I_M^*)^\red = \text{the matrix $A(I_M^*)$ with the first row and column deleted.}
\]
The matrix~$A(I_M^*)^\red$ is invertible, and indeed one can check that
\[
\det\bigl(A(I_M^*)^\red\bigr)=(-1)^M\cdot4.
\]
\par
We define a vector whose entries are the non-$\bfzero$ fibral components and a
vector whose entries are the coefficients of~$\F_P$:
\begin{align*}
  \bfF &= (\bfalpha,\bfgamma_0,\bfgamma_1,\ldots,\bfgamma_M,\bfbeta,\bfbeta'), \\
  \tilde\F_P &= (a,c_0,c_1,\ldots,c_M,b,b'),
\end{align*}
so
\[
\F_P=\bfF\cdot{^t\tilde\F_P}
\quad\text{and}\quad
A(I_M^*)^\red = {^t\!\bfF}\cdot\bfF.
\]
The $v$-fibral part of the (normalized) divisor~$\F_P$ is determined by
\[
P\cdot {^t\!\bfF}
= A(I_M^*)^\red\cdot {^t\tilde\F_P},
\quad\text{and thus}\quad
{^t\tilde\F_P} = \bigl(A(I_M^*)^\red\bigr)^{-1}\cdot \bigl(P\cdot {^t\!\bfF}\bigr).
\]
Since we have normalized~$\F_P$ so that~$i_v\bigl(\F_P,(O)\bigr)=0$, the
quantity appearing in~\eqref{eqn:lhatvPivPOivFPPO} is
\[
i_v\bigl(\F_P,(P)\bigr)
= P\cdot\F_P
= P\cdot\bfF\cdot{^t\tilde\F_P}
= (P\cdot\bfF)\cdot\bigl(A(I_M^*)^\red\bigr)^{-1}\cdot \bigl(P\cdot {^t\!\bfF}\bigr).
\]
\par
Figure~\ref{figure:IMstarintersectionmatrixinverse} gives an explicit
formula for~$\bigl(A(I_M^*)^\red\bigr)^{-1}$.  But we really only need
a few of its entries.

\begin{figure}[t]
\[
\bigl(A(I_M^*)^\red\bigr)^{-1}
= -\left(
\begin{array}{@{}ccccc|cc}
1 & 1 & 1 & \cdots & 1 & \frac12 & \frac12 \\[1\jot]
1 & 2 & 2 & \cdots & 2 & \frac22 & \frac22 \\[1\jot]
1 & 2 & 3 & \cdots & 3 & \frac32 & \frac32 \\[1\jot]
& \vdots & & \ddots & \vdots & \vdots & \vdots \\
1 & 2 & 3 & \cdots & M+2 & \frac{M+2}{2} & \frac{M+2}{2} \\[1\jot] \hline
\frac12 & \frac22 & \frac32 & \cdots & \frac{M+2}{2}  & \frac{M+4}{2}  & \frac{M+2}{2}
\vphantom{{\frac11}^1}  \\[1\jot]
\frac12 & \frac22 & \frac32 & \cdots & \frac{M+2}{2}  & \frac{M+2}{2}  & \frac{M+4}{2}  \\
\end{array}
\right)
\]
\caption{The inverse of the reduced  intersection matrix  for $I_M^*$ reduction.}
\label{figure:IMstarintersectionmatrixinverse}
\end{figure}

Indeed, since~$P$ must go through a multiplicity-1
component and since we are assuming that
$P\notin\Ecal^{(\bfzero)}(\FF_v)$, there are only three cases:
\[
P\cdot {^t\!\bfF} = \begin{cases}
  ^t(1,0,0,\ldots,0,0,0) &\text{if $P\in\Ecal^{(\bfalpha)}(\FF_v)$,} \\
  ^t(0,0,0,\ldots,0,1,0) &\text{if $P\in\Ecal^{(\bfbeta)}(\FF_v)$,} \\
  ^t(0,0,0,\ldots,0,0,1) &\text{if $P\in\Ecal^{(\bfbeta')}(\FF_v)$.} \\
  \end{cases}
\]
Writing~$B_{i,j}$ for the~$(i,j)$th entry of a matrix~$B$,
Figure~\ref{figure:IMstarintersectionmatrixinverse} gives the formulas
\[
i_v\bigl(\F_P,(P)\bigr) = \begin{cases}
  \bigl(A(I_M^*)^\red\bigr)^{-1}_{1,1} = -1 &\text{if $P\cdot\bfalpha=1$,} \\
  \bigl(A(I_M^*)^\red\bigr)^{-1}_{M+3,M+3} = -\frac{M+4}{4} &\text{if $P\cdot\bfbeta=1$,} \\
  \bigl(A(I_M^*)^\red\bigr)^{-1}_{M+4,M+4} = -\frac{M+4}{4}&\text{if $P\cdot\bfbeta'=1$.} \\
  \end{cases}
\]
Hence
\begin{align}
  P\in\Ecal^{(\bfalpha)}(\FF_v)
  &\quad\Longrightarrow\quad
  \begin{aligned}[t]
  \hhat_\Ecal(P)
  &= \frac{1}{12}v(\Dcal_\Ecal) - \frac12\\
  &= \frac{1}{12}v(j_E^{-1}), \\
  \end{aligned} \\
  P\in\Ecal^{(\bfbeta)}(\FF_v)
  &\quad\Longrightarrow\quad
  \begin{aligned}[t]
  \hhat_\Ecal(P)
  &= \frac{1}{12}v(\Dcal_\Ecal) - \frac{M+4}{8}\\
  &= -\frac{1}{24}v(j_E^{-1}) + \frac{1}{8}\bigl(v(j_E^{-1}) - M\bigr),\\
  \end{aligned}
\end{align}
where we have used the fact that
\[
v(\Dcal_\Ecal) = v(j_E^{-1}) + 6\quad\text{for $I_M^*$ reduction.}
\]
These formulas hold in all characteristics. If we further assume that
the residue characteristic is not~$2$, then~$v(j_E^{-1})=M$, and
the second formula simplifies.

%%%%%%%%%%%%%%%%%%%%%%%%%%%%%%%%%%%%%%%%%%%%%%%%%%%%%%%%%%%%%%%%%%%%%%
\begin{acknowledgement}
The author would like to thank David Masser for suggesting revisiting
this problem in the function field setting, Pascal Autissier for
sharpening the inequality in Lemma~\ref{lemma:hhatineq}, Jordan
Ellenberg for a helpful observation, Doug Ulmer for help
regarding inseparable extensions, and the referee for numerous useful
suggestions. Various numerical and algebraic calculations were done
using Pari-GP~\cite{PariGP}.  The author's research was partially
supported by Simons Collaboration Grant \#712332.
\end{acknowledgement}

%%%%%%%%%%%%%%%%%%%%%%%%%%%%%%%%%%%%%%%%%%%%%%%%%%%%%%%%%%%%%%%%%%%%%%
%%%%%%%%%%%%%%%%%%%%%%%%%%%%%%%%%%%%%%%%%%%%%%%%%%%%%%%%%%%%%%%%%%%%%%
%% \bibliographystyle{plain}
%% \bibliography{Lehmer}

\begin{thebibliography}{10}

\bibitem{CoxZucker}
David~A. Cox and Steven Zucker.
\newblock Intersection numbers of sections of elliptic surfaces.
\newblock {\em Invent. Math.}, 53(1):1--44, 1979.

\bibitem{Dobrowolski}
E.~Dobrowolski.
\newblock On a question of {L}ehmer and the number of irreducible factors of a
  polynomial.
\newblock {\em Acta Arith.}, 34(4):391--401, 1979.

\bibitem{HS-STNP1990}
Marc Hindry and Joseph~H. Silverman.
\newblock On {L}ehmer's conjecture for elliptic curves.
\newblock In {\em S\'{e}minaire de {T}h\'{e}orie des {N}ombres, {P}aris
  1988--1989}, volume~91 of {\em Progr. Math.}, pages 103--116. Birkh\"{a}user
  Boston, Boston, MA, 1990.

\bibitem{MR715605}
Serge Lang.
\newblock {\em Fundamentals of {D}iophantine geometry}.
\newblock Springer-Verlag, New York, 1983.

\bibitem{Laurent}
Michel Laurent.
\newblock Minoration de la hauteur de {N}\'{e}ron-{T}ate.
\newblock In {\em Seminar on number theory, {P}aris 1981--82 ({P}aris,
  1981/1982)}, volume~38 of {\em Progr. Math.}, pages 137--151. Birkh\"{a}user
  Boston, Boston, MA, 1983.

\bibitem{LS-TransAMS2024}
N.~Looper and J.~H. Silverman.
\newblock A {L}ehmer-type height lower bound for abelian surfaces over function
  fields.
\newblock {\em Trans. AMS.}, 377(3):1915--1955, 2024.

\bibitem{Manin}
Ju.~I. Manin.
\newblock The {T}ate height of points on an {A}belian variety, its variants and
  applications.
\newblock {\em Izv. Akad. Nauk SSSR Ser. Mat.}, 28:1363--1390, 1964.

\bibitem{Masser89}
D.~W. Masser.
\newblock Counting points of small height on elliptic curves.
\newblock {\em Bull. Soc. Math. France}, 117(2):247--265, 1989.

\bibitem{RoquetteTateCurve}
Peter Roquette.
\newblock {\em Analytic theory of elliptic functions over local fields}.
\newblock Hamburger Mathematische Einzelschriften (N.F.), Heft 1. Vandenhoeck
  \& Ruprecht, G\"{o}ttingen, 1970.

\bibitem{JHSThesis}
Joseph Silverman.
\newblock {\em The {N}{\'e}ron--{T}ate height on elliptic curves}.
\newblock ProQuest LLC, Ann Arbor, MI, 1982.
\newblock Thesis (Ph.D.)--Harvard University.

\bibitem{ATAEC}
Joseph~H. Silverman.
\newblock {\em Advanced topics in the arithmetic of elliptic curves}, volume
  151 of {\em Graduate Texts in Mathematics}.
\newblock Springer-Verlag, New York, 1994.

\bibitem{AEC}
Joseph~H. Silverman.
\newblock {\em The arithmetic of elliptic curves}, volume 106 of {\em Graduate
  Texts in Mathematics}.
\newblock Springer, Dordrecht, second edition, 2009.

\bibitem{Antwerp4Tate}
J.~Tate.
\newblock Algorithm for determining the type of a singular fiber in an elliptic
  pencil.
\newblock In {\em Modular functions of one variable, {IV} ({P}roc. {I}nternat.
  {S}ummer {S}chool, {U}niv. {A}ntwerp, {A}ntwerp, 1972)}, volume Vol. 476 of
  {\em Lecture Notes in Math.}, pages 33--52. Springer, Berlin-New York, 1975.

\bibitem{TateTateCurve}
John Tate.
\newblock A review of non-{A}rchimedean elliptic functions.
\newblock In {\em Elliptic curves, modular forms, \& {F}ermat's last theorem
  ({H}ong {K}ong, 1993)}, Ser. Number Theory, I, pages 162--184. Int. Press,
  Cambridge, MA, 1995.

\bibitem{PariGP}
{The PARI~Group}, Univ. Bordeaux.
\newblock {\em {PARI/GP version \texttt{2.13.3}}}, 2021.
\newblock \url{pari.math.u-bordeaux.fr/}.

\end{thebibliography}

\end{document}